\newcommand{\norm}[1]{\|#1\|}
\newcommand{\snorm}[1]{\|#1\|^2}
\newcommand{\del}{\partial}
\newcommand{\pder}[2]{\frac{\partial #1}{\partial #2}}
\newcommand{\bv}[1]{\mathbf{#1}}
\definecolor{light-gray}{gray}{0.8}
\theoremstyle{plain}
\newtheorem{theorem}{Theorem}
\newtheorem{lemma}[theorem]{Lemma}
\theoremstyle{definition}
\newtheorem{defi}{Definition}
\theoremstyle{remark}
\theoremstyle{remark}
\newtheorem{remark}{Remark}
\newlength\figureheight
\newlength\figurewidth
\journal{Journal of Computational Physics}
\begin{document}

\begin{frontmatter}

	\title{Acoustic shape optimization using energy stable curvilinear finite differences}

	\author{Gustav Eriksson\corref{cor1}}
	\ead{gustav.eriksson@it.uu.se}
	\author{Vidar Stiernstr\"{o}m\corref{cor2}}
	\ead{cstierns@stanford.edu}

	\cortext[cor1]{Corresponding author.}
	\cortext[cor2]{Now at the Department of Geophysics, Stanford University, Stanford, USA.}
	\address{Department of Information Technology, Uppsala University, PO Box 337, S-751 05 Uppsala, Sweden}

	\begin{abstract}
		A gradient-based method for shape optimization problems constrained by the acoustic wave equation is presented. The method makes use of high-order accurate finite differences with summation-by-parts properties on multiblock curvilinear grids to discretize in space. Representing the design domain through a coordinate mapping from a reference domain, the design shape is obtained by inverting for the discretized coordinate map. The adjoint state framework is employed to efficiently compute the gradient of the loss functional. Using the summation-by-parts properties of the finite difference discretization, we prove stability and dual consistency for the semi-discrete forward and adjoint problems. Numerical experiments verify the accuracy of the finite difference scheme and demonstrate the capabilities of the shape optimization method on two model problems with real-world relevance.
	\end{abstract}

	\begin{keyword}
		shape optimization \sep acoustic wave equation \sep finite differences \sep summation-by-parts \sep adjoint method
	\end{keyword}
\end{frontmatter}
\section{Introduction}\label{sec: intro}
Partial differential equation (PDE) constrained shape optimization is a highly important topic for applications in research and engineering. Together with topology optimization, it constitutes a central tool for computer-aided optimal design problems. To obtain an efficient algorithm for PDE-constrained optimization problems, gradient-based methods are usually employed. In the context of PDE-constrained optimization, the adjoint framework has proven to be a very efficient approach for computing gradients of the loss functional, especially if the number of design variables is large \cite{Giannakoglou2008,plessix2006review}. Usually, one has to choose whether to apply the adjoint method on the continuous or discretized equations. In the first approach, \emph{optimize-then-discretize} (OD), the continuous gradient and the forward and adjoint equations are discretized with methods of choice, potentially unrelated to one another. OD is typically easier to analyze and implement since it does not have to consider the particularities of the discretization methods. However, the computed gradient will in general not be the exact gradient of the discrete loss functional, which can lead to convergence issues in the optimization \cite{Glowinski1998}. The alternative approach, \emph{discretize-then-optimize} (DO), computes the exact gradient (up to round-off) with respect to the discrete forward and adjoint problems. However, depending on how the forward problem is discretized, the DO approach can lead to stability issues in the discrete adjoint problem \cite{collis2002analysis}. In this work, we discretize space using a finite difference scheme that is \emph{dual-consistent}. Disregarding the discretization of time, this means that the scheme from the DO approach is a stable and consistent approximation of the scheme from the OD approach, such that the two approaches are equivalent. {Dual-consistent discretizations and superconvergent approximate functionals from PDEs are well-known concepts in many disciplines where integral quantities are of interest \cite{babuska,pierce_adjoint,lu2005posteriori}. See also \cite{doi:10.1137/100790987,berg_nordstrom_2012} for recent examples from the finite difference community.}

Given that the computation of directional derivatives with respect to the design geometry is a key part of gradient-based shape optimization, the question of how the geometry is represented as a variable is important. Various approaches exist in the literature. Perhaps the most obvious and standard approach is based on domain transformations, where diffeomorphisms are used to map coordinates between the physical domain subject to optimization and a fixed reference domain \cite{doi:10.1137/1.9780898719826}. The coordinate mapping then provides a straightforward way to express integrals and derivatives in the physical domain in terms of integrals and derivatives in a fixed reference domain, greatly simplifying the computation of the gradient. Although the simplicity of this approach is highly attractive, it can lead to robustness issues, such that re-meshing is required to obtain an accurate solution \cite{berggren2023}. An alternative approach is to discretize the problem using a so-called fictitious-domain method, where the domain of interest is embedded in a larger domain of simpler shape \cite{GLOWINSKI1994283}. In \cite{https://doi.org/10.1002/nme.5621} a fictitious-domain method is used together with level-set functions and CutFEM to solve a shape optimization problem constrained by the Helmholtz equation. In the present study, we discretize the PDE using finite differences defined on boundary-conforming structured grids. For this reason, we take the first approach and represent the geometry through a coordinate mapping between the physical domain and a rectangular reference domain. As demonstrated by the numerical results in Section \ref{sec: num_exp}, the issue of poor mesh quality and re-meshing can be avoided by the use of regularization.

In the present study, we consider shape optimization problems constrained by the acoustic wave equation. Due to the hyperbolic nature of this PDE, high-order finite difference methods are well-suited discretization methods in terms of accuracy and efficiency \cite{Kreiss1972,KREISS1974}. One way to obtain robust and provably stable high-order finite difference methods is to use difference stencils with summation-by-parts (SBP) properties \cite{Svard2014,DelReyFernandez2014}. SBP finite differences have been used in multiple studies of the second-order wave equation in the past \cite{Mattsson2009,Mattsson2012}. With careful treatment of boundary conditions, these methods allow for semi-discrete stability proofs and energy estimates that are equivalent to the continuous energy balance equations. Methods for imposing boundary conditions include, e.g., the simultaneous-approximation-term method (SBP-SAT) \cite{STIERNSTROM2023112376,Almquist2020}, the projection method (SBP-P) \cite{Mattsson2006,Eriksson2023}, the ghost-point method (SBP-GP) \cite{Wang2019}, and a combination of SAT and the projection method (SBP-P-SAT), recently presented in \cite{Eriksson2023}. To the best of our knowledge, SBP finite differences employed to solve acoustic shape optimization problems have not yet been presented in the literature.

Although the shape optimization methodology developed in the present study focuses on acoustic waves it is also applicable to other types of wave equations. In addition, from a mathematical point of view, the method closely resembles the work in \cite{doi:10.1190/geo2022-0195.1}, where an SBP-SAT method is used for seismic full waveform inversion. The difference essentially lies in what the coefficients in the discretized equations represent. Here they are metric coefficients derived from the coordinate transformation between the physical and reference domains, while in \cite{doi:10.1190/geo2022-0195.1} they are unknown material properties. Indeed, the metric coefficients may be viewed as the material properties in an anisotropic wave equation on a fixed domain. {Other studies where SBP difference methods have been employed for adjoint-based optimization include aerodynamic shape optimization \cite{hicken_aerodyn}, optimization of turbulent flows \cite{kord_capecelatro_2023} and optimization of gas networks \cite{hossbach_et_al_2022}.}

The paper is developed as follows: In Section \ref{sec: prob_sec} the model problem is presented. Then, in Section \ref{sec: forward_problem}, we analyze the forward problem and present the SBP finite difference discretization. In Section \ref{sec: the_opt_prob} the optimization problem is considered, including the derivation of the semi-discrete adjoint problem and corresponding gradient expression. We evaluate the performance of the method using three numerical experiments in Section \ref{sec: num_exp}. Finally, the study is concluded in Section \ref{sec: concl}.

\section{Problem setup}
\label{sec: prob_sec}
The general type of shape optimization problems considered in this paper are of the form
\begin{subequations}
\label{eq: cont_min_prob}
\begin{equation}
		\min_{p} \mathtt{J}(u,p), \quad \text{such that} \\
\end{equation}
\begin{equation}
	\label{eq: cont_wave_eq_gen}
	\begin{alignedat}{4}
		u_{tt} &=  c^2 \Delta u + F(\bv{x},t), \quad &&\bv{x} \in \Omega_p, \quad &&t \in [0,T], \\
		L u &= g(\bv{x},t), &&\bv{x} \in \del \Omega_p, &&t \in [0,T], \\
		u &= 0, \quad u_t =0,  &&\bv{x} \in \Omega_p, &&t = 0, \\
	\end{alignedat}
\end{equation}
\end{subequations}
where $\mathtt{J}(u,p)$ is the loss functional, $F(\bv{x},t)$ is a forcing function, $c$ is the wave speed, and the linear operator $L$ together with boundary data $g(\bv{x},t)$ defines the boundary conditions. Essentially, the problem consists of finding the control variable $p$ determining the shape of the domain $\Omega_p \in \mathbb{R}^2$ such that $\mathtt{J}(u,p)$ is minimized while $u$ satisfies the acoustic wave equation \eqref{eq: cont_wave_eq_gen}. We use the subscript $p$ to indicate the domain's dependency on the control parameter.

To make the analysis easier to follow, consider the model problem given by 
\begin{subequations}
	\label{eq: cont_min_prob_multiblock}
	\begin{equation}
		\min_{p} {\mathtt{J}(u,p) = \int _0^T} (u^+(\bv{x}_r,t) - u_d(t))^2 \: dt, \quad \text{such that} \\
	\end{equation}
	\begin{equation}
	\label{eq: cont_wave_eq_multiblock}
	\begin{array}{lll}
		u^+_{tt} = c^2 \Delta u^+ + f(t) \hat{\delta}(\bv{x} - \bv{x_s}), 	&\bv{x} \in \Omega^+, 		&t \in [0,T], \\
		u^-_{tt} = c^2 \Delta u^-, 									&\bv{x} \in \Omega^-_p, 	&t \in [0,T], \\
		u^\pm_t + c \bv{n}^\pm \cdot \nabla u^\pm = 0, \quad &\bv{x} \in \del \Omega_p^{(\pm,w,e)},  &t \in [0,T], \\
		\bv{n}^- \cdot \nabla u^- = 0, \quad &\bv{x} \in \del \Omega_p^{(-,s)}, \quad &t \in [0,T], \\
		u^+ = 0, \quad &\bv{x} \in \del \Omega_p^{(+,n)}, \quad &t \in [0,T], \\
		u^+ - u^- = 0,												& \bv{x} \in \Gamma_I, &t \in [0,T], \\
		\bv{n}^+ \cdot \nabla u^+ + \bv{n}^- \cdot \nabla u^- = 0,			& \bv{x} \in \Gamma_I, &t \in [0,T], \\
		u^\pm = 0,  \quad u^\pm_t = 0, 										&\bv{x} \in \Omega_p, 		&t = 0,
	\end{array}
	\end{equation}
\end{subequations}
with domain $\Omega_p$ as depicted in Figure \ref{fig: lake_bc}. We assume that the top half of the domain is known and therefore split it into two blocks $\Omega_p = \Omega^+ \cup \Omega^-_p$, where only $\Omega^-_p$ is subject to the optimization. Superscripts $+$ and $-$ are used to indicate variables defined on the respective domains. This model problem can be thought of as determining the bathymetry in a segment of a lake or sea, given a time series of recorded pressure $u_d$ at a receiver located in $\bv{x_r}$ originating from an acoustic point source at $\bv{x_s}$, where $\hat{\delta}$ denotes the Dirac delta function. For simplicity, we also assume that we have one receiver and one source and that they are located in $\Omega^+$. Extensions to multiple sources and receivers in $\Omega^+$ follow straightforwardly. At the top boundary, a homogeneous Dirichlet boundary condition is prescribed, corresponding to a constant surface pressure, and at the curved bottom boundary we prescribe a fully reflecting homogeneous Neumann boundary condition. At the sides, first-order outflow boundary conditions \cite{engquist} are prescribed. For well-posedness, we further require continuity of the solutions $u^\pm$ and the flux $\bv{n}^\pm \cdot \nabla u^\pm$ across the interface $\Gamma_I = \Omega^+ \cap \Omega^-_p$, where $\bv{n}^\pm$ are the outwards-pointing normals of the respective domain.
\begin{figure}[!htbp]
	\centering
	\includegraphics[width=0.5\textwidth]{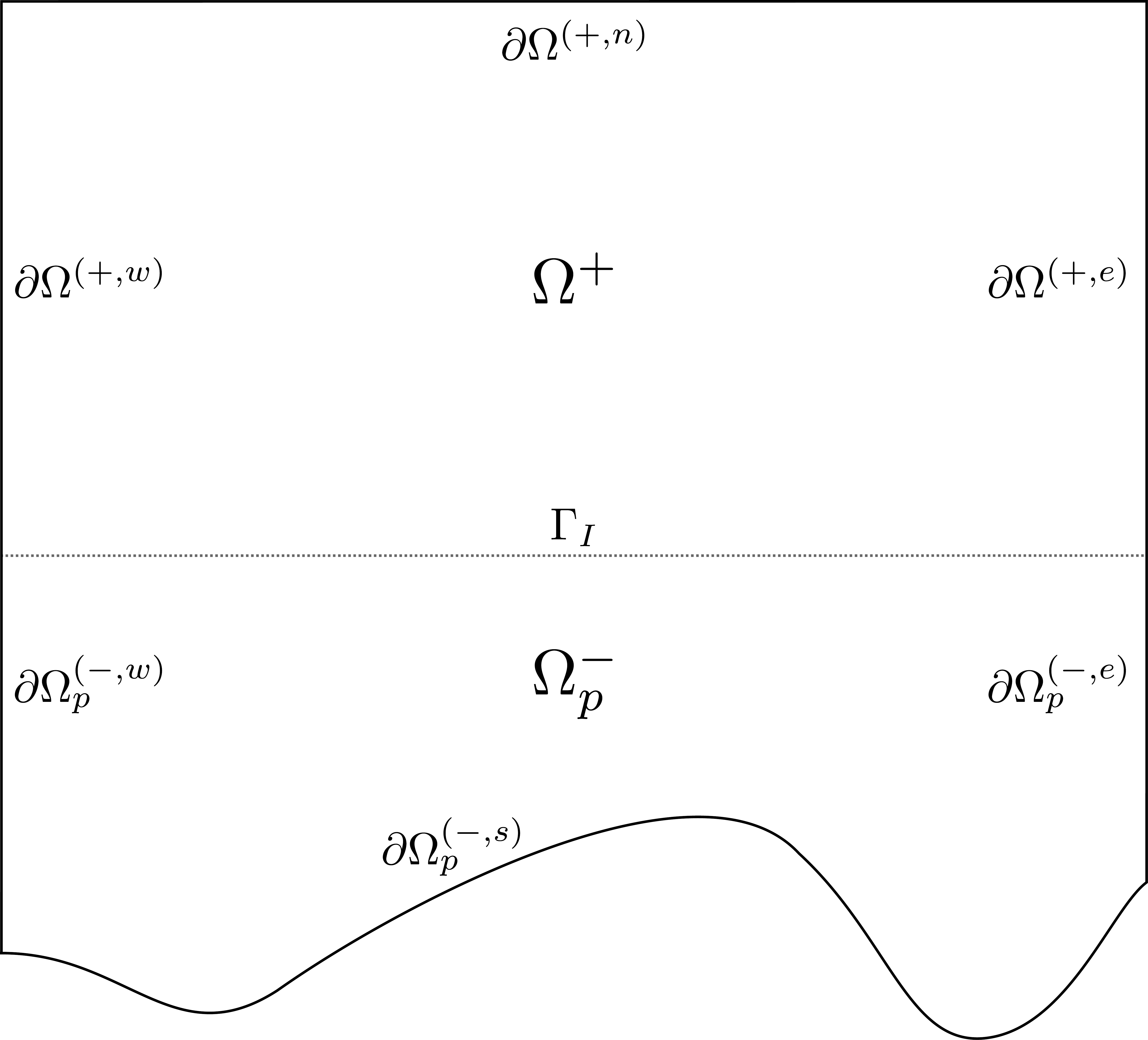}
	\caption{Boundary conditions of the bathymetry problem. The dotted line indicate the interface between $\Omega^+$ and $\Omega^-_p$.}
	\label{fig: lake_bc}
\end{figure}

\section{The forward problem}
\label{sec: forward_problem}
Initially, consider only the forward problem \eqref{eq: cont_wave_eq_multiblock} and its spatial discretization. In Section \ref{sec: the_opt_prob} we will return to the optimization problem. The acoustic wave equation on curvilinear or multiblock domains discretized using SBP finite differences has been treated on multiple occasions in the past, see e.g., \cite{Almquist2014, Wang2019, Almquist2019, Almquist2020, oreilly_petersson_2020, STIERNSTROM2023112376, Eriksson2023}. In this work, we formulate discretizations of the Laplace operator directly on the physical domains $\Omega^+$, and $\Omega^-_p$. As previously mentioned, on the non-rectangular domain $\Omega^-_p$ this is achieved through a coordinate map to a square reference domain. By incorporating the metric transformation, a process referred to as 'encapsulation' in \cite{alund_norstrom_2019}, the discrete Laplace operator satisfies SBP properties on  $\Omega^-_p$. Although this analysis is not new, it is reiterated here without proofs, since it is needed later in the context of optimizing for $p$. Using the discrete SBP Laplace operators we then enforce boundary and interface conditions through the SBP-P-SAT method. 

\subsection{Continuous analysis}
\label{sec: cont_analysis}
We begin by considering the well-posedness of \eqref{eq: cont_wave_eq_multiblock}. For simplicity, the wave speed $c$ is assumed to be constant in $\Omega_p$, but the extension to spatially variable $c(\bv{x})$ is straightforward. Let
\begin{equation}
	(u,v)_\Omega \equiv \int _{\Omega} u v \: d\bv{x}, \quad \snorm{u}_\Omega \equiv (u,u)_\Omega, \quad \text{and} \quad \langle u,v \rangle_{\partial \Omega} \equiv \int _{\partial \Omega} u v \: ds.
\end{equation}
{We begin by proving stability of \eqref{eq: cont_wave_eq_multiblock}, where it is sufficient to only consider the homogeneous problem, i.e. $f(t)$ is set to zero.} Multiplying the first equation in \eqref{eq: cont_wave_eq_multiblock} by $u^+_t$, the second equation by $u^-_t$, integrating over the domains, adding the results, and using Green's first identity leads to the energy equation
\begin{equation}
	\frac{dE}{dt} = 2 c^2 \langle u^+_t , \bv{n}^+ \cdot \nabla u^+ \rangle _{\partial \Omega^+} + 2 c^2 \langle u^-_t , \bv{n}^- \cdot \nabla u^- \rangle _{\partial \Omega^-_p},
\end{equation}
where $E$ is an energy given by
\begin{equation}
	E = \|u^+_t\|^2_{\Omega^+} + c^2  \|\nabla u^+\|^2_{\Omega^+_p} + \|u^-_t\|^2_{\Omega^-_p} + c^2 \|\nabla u^-\|^2_{\Omega^-_p} \geq 0.
\end{equation}
Inserting the boundary and interface conditions leads to
\begin{equation}
	\label{eq: two_block_enest}
	\frac{dE}{dt} = - 2 c \sum_{k = w,e} \langle u^+_t , u^+_t \rangle _{\partial \Omega^{(+,k)}} + \langle u^-_t , u^-_t \rangle _{\partial \Omega_p^{(-,k)}} \leq 0,
\end{equation}
which proves stability. From \eqref{eq: two_block_enest} it is clear that energy is dissipated through the east and west boundaries, due to the outflow boundary conditions.

The finite difference operators considered in the present study are defined on Cartesian grids. Therefore, for a general domain $\Omega^-_p$, we introduce a coordinate mapping from a reference domain $\tilde \Omega^- \in [0,1]^2$ to $\Omega^-_p$. Note that $\Omega^+$ is already rectangular, hence no coordinate transformation is needed in this part of the domain. Let
\begin{equation}
	\label{eq: coord_maps}
	x = x(\xi,\eta;p)	\quad \text{and} \quad y = y(\xi,\eta;p),
\end{equation}
define a smooth one-to-one coordinate mapping (diffeomorphism) from the reference domain $\xi,\eta \in \tilde \Omega^-$ to the physical domain $x,y \in \Omega_p^-$. Note that the functions $x$ and $y$ are parametrized by $p$. There are many ways one can derive the mappings \eqref{eq: coord_maps}. Here we use linear transfinite interpolation \cite{SMITH1982137}, which explicitly defines $x(\xi,\eta;p)$ and $y(\xi,\eta;p)$ given the coordinates of the boundary $\del \Omega_p^-$. Letting subscripts denote partial differentiation, the Jacobian determinant (or the area element) of the mapping is given by
\begin{equation}
	J = x_\xi y_\eta - x_\eta y_\xi.
\end{equation}
By the chain rule, derivatives with respect to $x$ and $y$ are given by
\begin{equation}
	\label{eq: cont_curv_dx_dy}
	\begin{alignedat}{2}
		u^-_x &= J^{-1} (y_\eta u^-_\xi - y_\xi u^-_\eta), \\
		u^-_y &= J^{-1} (-x_\eta u^-_\xi + x_\xi u^-_\eta).
	\end{alignedat}
\end{equation}
By extension, the Laplace operator in terms of derivatives in the reference domain is given by
\begin{equation}
	\label{eq: curv_laplace}
	\Delta u^- = J^{-1} ( (\alpha_1 u^-_\xi)_\xi + (\beta u^-_\xi)_\eta + (\beta u^-_\eta)_\xi + (\alpha_2 u^-_\eta)_\eta),
\end{equation}
where the metric coefficients are given by
\begin{equation}\label{eq: cont_curv_metric_coeff}
	\begin{alignedat}{2}
		\alpha_1 &= J^{-1} (x_\eta^2 + y_\eta^2), \\
		\beta &= -J^{-1} (x_\xi x_\eta + y_\xi y_\eta), \\
		\alpha_2 &=  J^{-1} (x_\xi^2 + y_\xi^2).
	\end{alignedat}
\end{equation}
Further, the transformed normal derivatives satisfy
\begin{equation}
		\label{eq: cont_curv_norms}
	 	\bv{n}^- \cdot \nabla u^- = 
	 	\begin{cases}
	 		\frac{\alpha_1 u^-_\xi + \beta u^-_\eta}{W_1}, \quad \xi, \eta \in  \del \tilde \Omega^{(-,w,e)}, \\
	 		\frac{\alpha_2 u^-_\eta + \beta u^-_\xi}{W_2}, \quad \xi, \eta \in  \del \tilde \Omega^{(-,s,n)},
	 	\end{cases}
\end{equation}
where $W_1$ and $W_2$ are boundary scaling factors given by
\begin{equation}
	W_1 = \sqrt{x_\xi^2 + y_\xi^2} \quad \text{and} \quad W_2 = \sqrt{x_\eta^2 + y_\eta^2}. 
\end{equation}
The inner products on the physical and reference domains are linked through the following equation:
\begin{equation}
	(u^-, v^-)_{\Omega_p^-} = (u^-, J v^-)_{\tilde \Omega^-}.
\end{equation} 
Similarly, the relation between boundary inner products on the physical and reference domains is given by
\begin{equation}
	\begin{alignedat}{2}
		\langle u^-, v^- \rangle_{\del \Omega_p^{(-,w,e)}} &= \langle u^-, W_1 v^- \rangle_{\del \tilde \Omega_p^{(-,w,e)}}, \\
		\langle u^-, v^- \rangle_{\del \Omega_p^{(-,s,n)}} &= \langle u^-, W_2 v^- \rangle_{\del \tilde \Omega_p^{(-,s,n)}}.
	\end{alignedat}
\end{equation}


\subsection{One-dimensional SBP operators}
We begin by presenting SBP finite difference operators in one spatial dimension, to be used as building blocks in Sections \ref{sec: discr_omega_plus} and \ref{sec: discr_omega_minus}. Let the interval $[x_l,x_r]$ be discretized by $m$ equidistant grid points $\bv{x} = [x_0,x_1,...,x_{m-1}]$, such that
\begin{equation}
	x_i = x_l + ih, \quad i = 0,1,...,m-1,
\end{equation}
where $h = \frac{x_r - x_l}{m-1}$ is the grid step size. SBP operators are associated with a norm matrix (or quadrature) $H$, defining an inner product and norm given by
\begin{equation}
	\label{eq: disc_1d_inprods}
	(\bv{u},\bv{v})_H = \bv{u}^\top H \bv{v} \quad \text{and} \quad \snorm{\bv{u}}_H = (\bv{u},\bv{u})_H, \quad \forall \bv{u},\bv{v} \in \mathbb{R}^m.
\end{equation}
{In this work we only consider diagonal-norm SBP operators, i.e. operators where $H$ is a diagonal matrix, for which the inner product in \eqref{eq: disc_1d_inprods} is simply the Euclidean dot product weighted by the diagonal of $H$}. {Later in the paper we shall use the same notation (inner product with a diagonal matrix subscript) to denote other discrete inner products defined equivalently, e.g., two-dimensional inner products.} We will also use boundary restriction vectors given by
\begin{equation}
	e_l^\top = [1,0,...,0] \quad \text{and} \quad e_r^\top = [0,...,0,1],
\end{equation}
and identity matrices $I_m$ of dimension $m \times m$.

To discretize the spatial derivatives we will use the first derivative SBP operators and the second derivative variable coefficient compatible SBP operators of interior orders four and six, first derived in \cite{strand1994, Mattsson2012}. The SBP properties of these operators are given in the following definitions:
\begin{defi}
	\label{def: sbp_D1}
	A difference operator $D_1 \approx \frac{\del}{\del x}$ is said to be a first derivative SBP operator if, for the norm $H$, the relation
	\begin{equation}
		H D_1 + D_1^\top H = -e_l e_l^\top + e_r e_r^\top,
	\end{equation}
	holds.
\end{defi}
\begin{defi}
	\label{def: sbp_var_D2}
	Let $D_1$ be a first derivative SBP operator and $\bv{c}$ be the restriction of a function $c(x)$ on the grid $\bv{x}$. A difference operator $D_2^{(\bv{c})} \approx \frac{\del}{\del x} c(x) \frac{\del}{\del x}$, is said to be a compatible second derivative SBP operator if, for the norm $H$, the relation
	\begin{equation}
		HD_2^{(\bv{c})} = -M^{(\bv{c})} - e_l e_l^\top \bv{c} d_l^\top + e_r e_r^\top \bv{c} d_r^\top,
	\end{equation}
	holds, where
	\begin{equation}
		M^{(\bv{c})} = D_1^\top H \bar{\bv{c}} D_1 + R^{(\bv{c})},
	\end{equation}
	with $R^{(\bv{c})}$ symmetric and semi-positive definite, $\bar{\bv{c}} = \mathtt{diag}(\bv{c})$, and 
	\begin{equation}
		d_l^\top = e_l^\top \hat{D}_1 \quad \text{and} \quad d_r^\top = e_r^\top \hat{D}_1,
	\end{equation}
	where the first and last rows of $\hat{D}_1$ approximates $\frac{\del}{\del x}$.
\end{defi}
Note that the remainder term $R^{(\bv{c})}$ is small (zero to the order of accuracy) \cite{Mattsson2012}. We also define
\begin{equation}
	D_2 \equiv D_2^{(\bv{1})} \quad \text{and} \quad R \equiv R^{(\bv{1})},
\end{equation}
where $\bv{1}$ is a vector of length $m$ with only ones.

\subsection{Two-dimensional operators on $\Omega^+$}
\label{sec: discr_omega_plus}
Since $\Omega^+$ is rectangular by construction, operators on $\Omega^+$ are directly obtained through tensor products of the one-dimensional operators. Let $m^+_x$ and $m^+_y$ denote the number of grid points in the $x$- and $y$-directions, respectively, and let $\bv{v}^+ \in \mathbb{R}^{N^+}$, where $N^+ = m^+_x m^+_y$, denote a column-major ordered solution vector. The two-dimensional SBP operators are constructed using tensor products as follows:
\begin{equation}
	\label{eq: disc_2d_tensor_prods_p}
	\begin{alignedat}{5}
		D^+_x &= (D_1 \otimes I_{m^+_y}), &&D^+_y = (I_{m^+_x} \otimes D_1), \\
		D^+_{xx} &= (D_2 \otimes I_{m^+_y}),  &&D^+_{yy} = (I_{m^+_x} \otimes D_2), \\
		H^+_x &= (H \otimes I_{m^+_y}), &&H^+_y = (I_{m^+_x} \otimes H), \\
		e^+_w &= (e_l^\top \otimes I_{m^+_y}), &&e^+_e = (e_r^\top \otimes I_{m^+_y}), \\
		e^+_s &= (I_{m^+_x} \otimes e_l^\top), &&e^+_n = (I_{m^+_x} \otimes e_r^\top), \\
		d^+_w &= -(d_l^\top \otimes I_{m^+_y}), &&d^+_e = (d_r^\top \otimes I_{m^+_y}), \\
		d^+_s &= -(I_{m^+_x} \otimes d_l^\top),\quad &&d^+_n = (I_{m^+_x} \otimes d_r^\top).
	\end{alignedat}	
\end{equation}
For notational clarity, we have used the same symbols for 1D operators in the $x$- and $y$-directions in \eqref{eq: disc_2d_tensor_prods_p}, although they are different in general (since they depend on the number of grid points and the grid spacing).

The discrete Laplace operator in $\Omega^+$ is
\begin{equation}
	D_L^+ = D_{xx}^+ + D_{yy}^+.
\end{equation}
{We also have the following norm matrices:
\begin{equation}
	\begin{alignedat}{2}
		H^+ &= H^+_x H^+_y, \\
		H^+_{w,e,s,n} &= H,
	\end{alignedat}
\end{equation}
with inner products $(\cdot,\cdot)_{H^+}$ and $(\cdot,\cdot)_{H^+_{w,e,s,n}}$ defined as in \eqref{eq: disc_1d_inprods}.} Using Definitions \ref{def: sbp_D1} and \ref{def: sbp_var_D2}, $D_L^+$ can be shown to satisfy {
\begin{equation}
	\label{eq: disc_laplace_sbp_p}
	\begin{alignedat}{2}
		(\bv{u},D_L^+ \bv{v})_{H^+} &= - (D^+_x \bv{u}, D^+_x \bv{v})_{H^+} - (D^+_y \bv{u}, D^+_y \bv{v})_{H^+} - (\bv{u},\bv{v})_{R^+} \\
		&+ \sum_{k = w,e,s,n} ( e^+_k \bv{u}, d^+_k \bv{v} )_{H^+_k}, \quad \forall \bv{u},\bv{v} \in \mathbb{R}^{N^+},
	\end{alignedat}
\end{equation}
}
where 
\begin{equation}
	(\bv{u},\bv{v})_{R^+} = \bv{u}^\top (R^+_{x} H^+_y + R^+_{y} H^+_x) \bv{v},
\end{equation}
with $R^+_{x} = (R \otimes I_{m^+_y})$ and $R^+_{y} = (I_{m^+_x} \otimes R)$. Note that $R^+_{x} \approx 0$, $R^+_{y} \approx 0$ (since $R \approx 0)$, and
\begin{equation}
	(\bv{u},\bv{v})_{R^+} = (\bv{v},\bv{u})_{R^+} \quad \text{and} \quad \snorm{\bv{u}}_{R^+} := (\bv{u},\bv{u})_{R^+} \geq 0, \quad \forall \bv{u},\bv{v} \in \mathbb{R}^{N^+}.
\end{equation}
\subsection{Curvilinear operators on $\Omega^-_p$}
\label{sec: discr_omega_minus}
We begin by discretizing $\tilde \Omega^-$ using $m^-_\xi$ and $m^-_\eta$ equidistant grid points in the $\xi$- and $\eta$-directions, respectively. As in $\Omega^+$, we let $\bv{v}^- \in \mathbb{R}^{N^-}$, where $N^- = m^-_\xi m^-_\eta$, denote a column-major ordered solution vector. Using tensor products, we have the following two-dimensional SBP operators in $\tilde \Omega^-$:
\begin{equation}
	\label{eq: disc_2d_tensor_prods_m}
	\begin{alignedat}{5}
		D^-_\xi &= (D_1 \otimes I_{m^-_\eta}), \quad &&D^-_\eta = (I_{m^-_\xi} \otimes D_1), \\
		\hat{D}^-_\xi &= (\hat{D}_1 \otimes I_{m^-_\eta}), \quad &&\hat{D}^-_\eta = (I_{m^-_\xi} \otimes \hat{D}_1), \\
		H^-_\xi &= (H \otimes I_{m^-_\eta}), &&H^-_\eta = (I_{m^-_\xi} \otimes H), \\
		e^-_w &= (e_l^\top \otimes I_{m^-_\eta}), &&e^-_e = (e_r^\top \otimes I_{m^-_\eta}), \\
		e^-_s &= (I_{m^-_\xi} \otimes e_l^\top), &&e^-_n = (I_{m^-_\xi} \otimes e_r^\top), \\
	\end{alignedat}	
\end{equation}
where $\hat{D}_1$ is the boundary derivative operator in Definition \ref{def: sbp_var_D2}. Note that with a general variable coefficients vector $\bv{c} \in \mathbb{R}^{N^-}$, tensor products can not be used to construct the two-dimensional variable coefficient operators $D^{(\bv{c})}_{\xi\xi}$ and $D^{(\bv{c})}_{\eta\eta}$. Instead, the one-dimensional operators are built line-by-line with the corresponding values of $\bv{c}$ and stitched together to form the two-dimensional operators. See \cite{Almquist2014} for more details on this. 

Next, we use the relations in Section \ref{sec: cont_analysis} to derive finite difference operators in $\Omega_p^-$. As previously mentioned, this derivation can be found with more detail in, e.g., \cite{Almquist2014, Almquist2020}. Since the mappings $x(\xi,\eta;p)$ and $y(\xi,\eta;p)$ are not generally known analytically, the metric derivatives $x_\xi$, $x_\eta$, $y_\xi$, and $y_\eta$ are computed using the first derivative SBP operators $D^-_\xi$ and $D^-_\eta$. If $\bv{x}$ and $\bv{y}$ are vectors containing the coordinates on the physical grid, we have the discrete metric coefficient diagonal matrices
\begin{equation}
	\label{eq: disc_metric_dervs}
	\begin{alignedat}{2}
		\bv{X}_\xi = \mathtt{diag}(D^-_\xi \bv{x}), \\
		\bv{X}_\eta = \mathtt{diag}(D^-_\eta \bv{x}), \\
		\bv{Y}_\xi = \mathtt{diag}(D^-_\xi \bv{y}), \\
		\bv{Y}_\eta = \mathtt{diag}(D^-_\eta \bv{y}),
	\end{alignedat}
\end{equation}
and
\begin{equation}
	\label{eq: disc_metric_coeffsa}
	\begin{alignedat}{2}
		\bv{J} &= \bv{X}_\xi \bv{Y}_\eta - \bv{X}_\eta \bv{Y}_\xi, \\
		\alpha_1 &= \bv{J}^{-1} (\bv{X}_\eta^2 + \bv{Y}_\eta^2), \\
		\beta &= -\bv{J}^{-1} (\bv{X}_\xi \bv{X}_\eta + \bv{Y}_\xi \bv{Y}_\eta), \\
		\alpha_2 &= \bv{J}^{-1} (\bv{X}_\xi^2 + \bv{Y}_\xi^2).
	\end{alignedat}
\end{equation}

Using \eqref{eq: cont_curv_dx_dy}, \eqref{eq: curv_laplace}, and \eqref{eq: cont_curv_norms}, we get the first derivative operators
\begin{equation}
	\begin{alignedat}{2}
		D_x^- &= J^{-1} (\bv{Y}_\eta D^-_\xi - \bv{Y}_\xi D^-_\eta), \\
		D_y^- &= J^{-1} (-\bv{X}_\eta D^-_\xi + \bv{X}_\xi D^-_\eta),
	\end{alignedat}
\end{equation}
the two-dimensional curvilinear Laplace operator
\begin{equation}
	D_L^- = J^{-1} (D_{\xi \xi}^{(\alpha_1)} + D^-_\eta \beta D^-_\xi + D^-_\xi \beta D^-_\eta + D_{\eta \eta} ^{(\alpha_2)}),
\end{equation}
and the discrete normal derivative operators
\begin{equation}
	\begin{alignedat}{2}
		d^-_w &= -e^-_w \bv{W}_2^{-1} (\alpha_1 e_w^{-\top} e^-_w \hat{D}^-_\xi + \beta e_w^{-\top} e^-_w D^-_\eta), \\
		d^-_e &= e^-_e \bv{W}_2^{-1} (\alpha_1 e_e^{-\top} e^-_e \hat{D}^-_\xi + \beta e_e^{-\top} e^-_e D^-_\eta), \\
		d^-_s &= -e^-_s \bv{W}_1^{-1} (\alpha_2 e_s^{-\top} e^-_s \hat{D}^-_\eta + \beta e_s^{-\top} e^-_s D^-_\xi), \\
		d^-_n &= e^-_n \bv{W}_1^{-1} (\alpha_2 e_n^{-\top} e^-_n \hat{D}^-_\eta + \beta e_n^{-\top} e^-_n D^-_\xi),
	\end{alignedat}
\end{equation}
where
\begin{equation}
	\label{disc_W1_W2_minus}
	\bv{W}_1 = \sqrt{\bv{X}_\xi^2 + \bv{Y}_\xi^2} \quad \text{and} \quad \bv{W}_2 = \sqrt{\bv{X}_\eta^2 + \bv{Y}_\eta^2}.
\end{equation}
{We also have the following norm matrices:
\begin{equation}
	\begin{alignedat}{2}
		H^- &= H^-_\xi H^-_\eta J, \\
		H^-_w &= H e_w \bv{W}_2 e_w^\top , \\
		H^-_e &= H e_e \bv{W}_2 e_e^\top , \\
		H^-_s &= H e_s \bv{W}_1 e_s^\top , \\
		H^-_n &= H e_n \bv{W}_1 e_n^\top ,
	\end{alignedat}
\end{equation}
with inner products $(\cdot,\cdot)_{H^-}$ and $(\cdot,\cdot)_{H^-_{w,e,s,n}}$ defined as in \eqref{eq: disc_1d_inprods}.} {Note that the power of two and square roots in \eqref{eq: disc_metric_coeffsa} and \eqref{disc_W1_W2_minus} are evaluated elementwise}. Using Definitions \ref{def: sbp_D1} and \ref{def: sbp_var_D2}, the Laplace operator $D_L^-$ can be shown to satisfy (see \cite{Almquist2020})
\begin{equation}
	\label{eq: disc_laplace_sbp_m}
	\begin{alignedat}{2}
		(\bv{u},D_L^- \bv{v})_{H^-} &= - (D^-_x \bv{u}, D^-_x \bv{v})_{H^-} - (D^-_y \bv{u}, D^-_y \bv{v})_{H^-} - (\bv{u},\bv{v})_{R^-} \\
		&+ \sum_{k = w,e,s,n} ( e^-_k \bv{u}, d^-_k \bv{v} )_{H^-_k}, \quad \forall \bv{u},\bv{v} \in \mathbb{R}^{N^-},
	\end{alignedat}
\end{equation}
where 
\begin{equation}
	(\bv{u},\bv{v})_{R^-} = \bv{u}^\top (R_\xi^{(\alpha_1)} H_\eta^- + R_\eta^{(\alpha_2)} H_\xi^-)\bv{v},
\end{equation}
with $R_\xi^{(\alpha_1)}$ and $R_\eta^{(\alpha_2)}$ created the same way as $D^{(\alpha_1)}_{\xi\xi}$ and $D^{(\alpha_2)}_{\eta\eta}$. Note that $R_\xi^{(\alpha_1)} \approx 0$, $R_\eta^{(\alpha_2)} \approx 0$ (since $R^{(\bv{c})} \approx 0)$, and
\begin{equation}
	(\bv{u},\bv{v})_{R^-} = (\bv{v},\bv{u})_{R^-} \quad \text{and} \quad \snorm{\bv{u}}_{R^-} := (\bv{u},\bv{u})_{R^-} \geq 0, \quad \forall \bv{u},\bv{v} \in \mathbb{R}^{N^-}.
\end{equation}
\remark{The two domains $\Omega^+$ and $\Omega^-_p$} are conforming at the interface $\Gamma_I$, and for convenience reasons, we shall use the same number of grid points on both sides of the interface, i.e. $m_{\Gamma_I} = m^+_x = m^-_\xi$. Therefore,
\begin{equation}
	(\bv{u},\bv{v})_{H_s^+} = (\bv{u},\bv{v})_{H^-_n}, \quad \forall \bv{u},\bv{v} \in \mathbb{R}^{m_{\Gamma_I}}.
\end{equation}
\subsection{SBP-P-SAT discretization}
We now return to the forward problem \eqref{eq: cont_wave_eq_multiblock}. By replacing all spatial derivatives in \eqref{eq: cont_wave_eq_multiblock} by their corresponding SBP operators, we obtain a constrained initial value problem given by,
\begin{equation}
	\label{eq: disc_const_ode}
	\begin{alignedat}{2}
		&\bv{v}^{+}_{tt} = c^2 D^{+}_L \bv{v}^{+} + f(t) \tilde{\bv{d}}_s, \quad &&t > 0, \\
		&\bv{v}^{-}_{tt} = c^2 D^{-}_L \bv{v}^{-},  &&t > 0, \\
		&e_{w,e}^{\pm} \bv{v}^{\pm}_t + c d^{\pm}_{w,e} \bv{v}^{\pm} = 0,  &&t > 0, \\
		&d^{-}_s \bv{v}^{-} = 0,  &&t > 0, \\
		&e^{+}_n \bv{v}^{+} = 0,  &&t > 0, \\
		&e^{+}_s \bv{v}^{+} - e^{-}_n \bv{v}^{-} = 0,  &&t > 0, \\
		&d^{+}_s \bv{v}^{+} + d^{-}_n \bv{v}^{-} = 0,  &&t > 0, \\
		& \bv{v}^\pm = 0, \quad \bv{v}_t^\pm = 0, && t = 0,
	\end{alignedat}
\end{equation}
where
\begin{equation}
	\label{eq: disc_point_source}
	\tilde{\bv{d}}_s = (H^+)^{-1} (\delta^{(s)}_x \otimes \delta^{(s)}_y).
\end{equation}
Here $\delta^{(s)}_x$ and $\delta^{(s)}_y$ are discrete one-dimensional point sources discretized as in \cite{PETERSSON2016532}, using the same number of moment conditions as the order of the SBP operators and no smoothness conditions. 

{We impose the boundary and interface conditions in \eqref{eq: disc_const_ode} using a combination of the SAT method \cite{Mattsson2008,Wang2018,Almquist2020} and the projection method \cite{Eriksson2023,ERIKSSON2023111907}. SATs are used to weakly impose the outflow and Neumann boundary conditions and to couple the fluxes across the interface, while the projection method strongly imposes the Dirichlet boundary conditions and continuity of the solution across the interface}. This choice is made to keep the projection operator as simple as possible while avoiding the so-called borrowing trick necessary for an SBP-SAT discretization \cite{Wang2016}. A hybrid SBP-P-SAT method for the second-order wave equation on {Cartesian} multiblock domains with non-conforming interfaces was presented in \cite{Eriksson2023}. Indeed, the scheme presented here is the same as in \cite{Eriksson2023} if the interpolation operators are replaced with identity matrices {and $\Omega^-_p$ is Cartesian}. More details on the projection method can be found in \cite{Olsson1995a,Olsson1995}. Let 
\begin{equation}
	\bv{w} = 
	\begin{bmatrix}
		\bv{v}^{+} \\ \bv{v}^{-}
	\end{bmatrix} \in \mathbb{R}^N,
\end{equation}
where $N = N^+ + N^-$, denote the global solution vector and 
\begin{equation}
	\bar H = 
	\begin{bmatrix}
		H^+ & 0 \\ 0 & H^-
	\end{bmatrix},
\end{equation}
a global norm matrix with associated inner product defined as in \eqref{eq: disc_1d_inprods}. A consistent SBP-P-SAT discretization is then given by
\begin{equation}
	\label{eq: disc_ODE_system}
	\begin{alignedat}{2}
		&\bv{w}_{tt} = D \bv{w} + E \bv{w}_t + f(t) \bv{d}_s, \quad  &&t > 0, \\
		&\bv{w} = 0, \quad \bv{w}_t = 0,  &&t = 0,
	\end{alignedat}
\end{equation}
where
\begin{equation}
\label{eq: disc_operator_specs}
\begin{alignedat}{2}
	D &= c^2
	P \left ( \begin{bmatrix}
		D_L^{+} & 0 \\
		0 & D_L^{-}
	\end{bmatrix} + SAT_{BC_1} + SAT_{IC} \right ) P, \\
	E &= c P SAT_{BC_2} P, \\
	SAT_{BC_1} &= -\bar H^{-1} \left (
	\sum_{k = w,e} \begin{bmatrix}
		e_k^{+ \top} H^{+}_k d_k^{+} & 0 \\
		0 & e_k^{- \top} H^{-}_k d_k^{-}
	\end{bmatrix} + \begin{bmatrix}
		0 & 0 \\
		0 & e_s^{- \top} H^{-}_s d_s^{-}
	\end{bmatrix} \right ), \\
	SAT_{BC_2} &= - \bar H^{-1}
	\sum_{k = w,e}  \begin{bmatrix}
		 e_k^{+ \top} H^{+}_k e_k^{+} & 0 \\
		0 & e_k^{-\top} H^{-}_k e_k^{-}
	\end{bmatrix}, \\
	SAT_{IC} &= -\bar H^{-1}
	\begin{bmatrix}
		e_s^{+ \top} H^{+}_s d^{+}_s & e_s^{+ \top} H^{+}_s d^{-}_n \\
		0 & 0
	\end{bmatrix}, \quad \text{and} \\
	\bv{d}_s &=
		\begin{bmatrix}
			\tilde{\bv{d}}_s \\ 0
		\end{bmatrix}.
\end{alignedat}
\end{equation}
The projection operator is given by
\begin{equation}
	P = I - \bar H^{-1} L^\top (L \bar H^{-1} L^\top)^{-1} L,
\end{equation}
where
\begin{equation}
	L = 
	\begin{bmatrix}
		e^{+}_n & 0 \\
		e^{+}_s & -e^{-}_n
		\end{bmatrix}.
\end{equation}
This corresponds to imposing the conditions
\begin{equation}
	\begin{alignedat}{2}
		&e_{w,e}^{\pm} \bv{v}^{\pm}_t + c d^{\pm}_{w,e} \bv{v}^{\pm} = 0,  \quad &&t > 0, \\
		&d^{-}_s \bv{v}^{-} = 0,  &&t > 0, \\
		&d^{+}_s \bv{v}^{+} + d^{-}_n \bv{v}^{-} = 0,  &&t > 0,
	\end{alignedat}
\end{equation}
weakly using the SAT method and the conditions
\begin{equation}
	\begin{alignedat}{2}
		&e^{+}_n \bv{v}^{+} = 0,  &&t > 0, \\
		&e^{+}_s \bv{v}^{+} - e^{-}_n \bv{v}^{-} = 0,  \quad &&t > 0,
	\end{alignedat}
\end{equation}
strongly using the projection method.

We now prove three Lemmas. The first one is with regards to the stability of the scheme \eqref{eq: disc_ODE_system} while the second and third are on its self-adjointness properties. Similar self-adjointness properties of SBP-SAT discretizations of the acoustic and elastic wave equation have been shown in e.g. \cite{almquist_dunham_2021, doi:10.1190/geo2022-0195.1}. However, to the best of our knowledge, the derivation is new for the SBP-P-SAT discretization presented herein.
\begin{lemma}
	\label{lemma: ode_stability}
	The SBP-P-SAT scheme \eqref{eq: disc_ODE_system} is stable.
\end{lemma}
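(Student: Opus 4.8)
The plan is to carry out a discrete energy estimate mirroring the continuous one in \eqref{eq: two_block_enest}, where the bulk of the work is bookkeeping the boundary and interface contributions so that the projection $P$ accounts for the Dirichlet and solution-continuity conditions while the SAT terms account for the outflow, Neumann, and flux-continuity conditions. Throughout I would use that $P$ is idempotent, $\bar H$-self-adjoint (i.e. $\bar H P = P^\top \bar H$, which follows directly from the definition of $P$ and $L\bar H^{-1}L^\top$ being symmetric), and that $\mathrm{range}(P) = \mathrm{null}(L)$, so $P$ is the $\bar H$-orthogonal projection onto the constraint manifold.

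The first step is to show the solution stays in $\mathrm{range}(P)$, so the outer projections can later be removed. Since data does not affect stability, as in the continuous analysis I set $f \equiv 0$; then $\bv{q} := (I-P)\bv{w}$ satisfies $\bv{q}_{tt} = (I-P)(D\bv{w} + E\bv{w}_t) = 0$ because $(I-P)D = (I-P)E = 0$, together with $\bv{q}(0) = \bv{q}_t(0) = 0$, so $P\bv{w} = \bv{w}$ for all $t$. (For the forced problem one additionally uses $L\bv{d}_s = 0$, which holds since the discrete point source is supported in the interior of $\Omega^+$.) In particular the two block rows of $L\bv{w}=0$ give $e^+_n\bv{v}^+ = 0$ and $e^+_s\bv{v}^+ = e^-_n\bv{v}^-$ for all $t$, and likewise for $\bv{v}^\pm_t$.

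Next I would multiply the first equation of \eqref{eq: disc_ODE_system} by $\bv{w}_t^\top \bar H$. Using $P\bv{w} = \bv{w}$, $P\bv{w}_t = \bv{w}_t$ and $\bar H P = P^\top\bar H$, the outer projections in $D$ and $E$ drop out, leaving
\[
\tfrac12\tfrac{d}{dt}\|\bv{w}_t\|_{\bar H}^2 = c^2\,\bv{w}_t^\top \bar H\big(\widehat{D}_L + SAT_{BC_1} + SAT_{IC}\big)\bv{w} + c\,\bv{w}_t^\top \bar H\, SAT_{BC_2}\,\bv{w}_t,
\]
with $\widehat{D}_L = \mathrm{diag}(D^+_L, D^-_L)$. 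Applying the SBP identity \eqref{eq: disc_laplace_sbp} for $D^-_L$ and its $\Omega^+$ counterpart with arguments $\bv{v}^\pm_t$ and $\bv{v}^\pm$ converts the $\widehat{D}_L$ term into $-\tfrac12\tfrac{d}{dt}\sum_\pm\big(\|D^\pm_x\bv{v}^\pm\|_{H^\pm}^2 + \|D^\pm_y\bv{v}^\pm\|_{H^\pm}^2 + \|\bv{v}^\pm\|_{R^\pm}^2\big)$ plus a collection of boundary terms of the form $c^2\langle e^\pm_k\bv{v}^\pm_t, d^\pm_k\bv{v}^\pm\rangle_{H^\pm_k}$, the $\bar H^{-1}$ in each SAT block cancelling the leading $\bar H$.

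The main obstacle is the remaining bookkeeping, verifying that these boundary terms all cancel or dissipate. I expect: the $SAT_{BC_1}$ blocks exactly cancel the west and east terms of both blocks and the south (Neumann) term of $\Omega^-_p$; the north (Dirichlet) term of $\Omega^+$ vanishes because $e^+_n\bv{v}^+_t = 0$ by the projection; and the two interface terms (south of $\Omega^+$, north of $\Omega^-_p$) combine with the $SAT_{IC}$ block and, using the projection-enforced continuity $e^+_s\bv{v}^+_t = e^-_n\bv{v}^-_t$ together with the conforming-grid identity $H^+_s = H^-_n$, cancel as well. What survives is $c\,\bv{w}_t^\top\bar H\,SAT_{BC_2}\,\bv{w}_t = -c\sum_{k=w,e}\big(\|e^+_k\bv{v}^+_t\|_{H^+_k}^2 + \|e^-_k\bv{v}^-_t\|_{H^-_k}^2\big)\le 0$, the discrete analogue of the outflow dissipation in \eqref{eq: two_block_enest}. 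Collecting everything yields $\tfrac{d}{dt}E_h \le 0$ for the discrete energy
\[
E_h = \|\bv{w}_t\|_{\bar H}^2 + c^2\sum_\pm\big(\|D^\pm_x\bv{v}^\pm\|_{H^\pm}^2 + \|D^\pm_y\bv{v}^\pm\|_{H^\pm}^2 + \|\bv{v}^\pm\|_{R^\pm}^2\big),
\]
which is non-negative since $\bar H$ and $H^\pm$ are positive definite and $\|\bv{v}^\pm\|^2_{R^\pm}\ge 0$; this energy estimate is the claimed stability. Besides the interface bookkeeping, the one point to watch is that the leading projections genuinely disappear, which is precisely why establishing $P\bv{w}=\bv{w}$ first is essential.
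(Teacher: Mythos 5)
Your proposal is correct and follows essentially the same route as the paper: a discrete energy estimate obtained by multiplying by $\bv{w}_t^\top\bar H$, applying the SBP identity \eqref{eq: disc_laplace_sbp}, cancelling the SAT boundary/interface terms, and invoking the projection-enforced constraints, leaving only the dissipative $SAT_{BC_2}$ contribution. The only (harmless) difference is that you first establish the invariance $P\bv{w}=\bv{w}$ via $(I-P)D=(I-P)E=0$ and then drop the projections, whereas the paper works directly with the projected variable $\tilde{\bv{w}}=P\bv{w}$ and uses $L P\bv{w}=0$; both yield the same energy and the same bound $\tfrac{d}{dt}E_h\le 0$.
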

\begin{proof}
	We prove stability using the energy method. Since data does not influence stability, we set $f(t) = 0$. Taking the inner product between $\bv{w_t}$ and \eqref{eq: disc_ODE_system} gives
	\begin{equation}
		\begin{alignedat}{2}
			\frac{1}{2} \frac{d}{dt} \snorm{\bv{w}_t}_{\bar{H}} &= c^2 (\tilde{\bv{v}}_t^{+}, D_L^{+} \tilde{\bv{v}}^{+})_{H^{+}} + c^2 (\tilde{\bv{v}}_t^{-}, D_L^{-} \tilde{\bv{v}}^{-})_{H^{-}} \\
			&- \sum_{k = w,e} (c \snorm{e_k^{+} \tilde{\bv{v}}_t^{+}}_{H^{+}_k} + c^2 ( e_k^{+} \tilde{\bv{v}}_t^{+}, d_k^{+} \tilde{\bv{v}}^{+} )_{H_k^{+}} \\
			&+ c \snorm{e_k^{-} \tilde{\bv{v}}_t^{-}}_{H^{-}_k} + c^2 ( e_k^{-} \tilde{\bv{v}}_t^{-}, d_k^{-} \tilde{\bv{v}}^{-} )_{H_k^{+}}) \\
			&- c^2 ( e_s^{-} \tilde{\bv{v}}_t^{-}, d_s^{-} \tilde{\bv{v}}^{-} )_{H_s^{-}} - c^2 ( e_s^{+} \tilde{\bv{v}}_t^{+}, d_s^{+} \tilde{\bv{v}}^{+} + d_n^{-} \tilde{\bv{v}}^{-} )_{H_s^{+}},
		\end{alignedat}
	\end{equation}
	where $P \bv{w} = \tilde{\bv{w}} = \begin{bmatrix} \tilde{\bv{v}}^+ \\ \tilde{\bv{v}}^- \end{bmatrix}$ denotes the projected solution vector. Using \eqref{eq: disc_laplace_sbp_p} and \eqref{eq: disc_laplace_sbp_m} and rearranging terms lead to
	\begin{equation}
		\label{eq: disc_en_eq_1}
		\begin{alignedat}{2}
			\frac{d}{dt} (E^+ + E^-) &= - 2c \sum_{k = w,e} \snorm{e_k^{-} \tilde{\bv{v}}_t^{-}}_{H^{-}_k} + \snorm{e_k^{+} \tilde{\bv{v}}_t^{+}}_{H^{+}_k} \\
			&+ 2 c^2 ( e_n^{-} \tilde{\bv{v}}_t^{-} - e_s^{+} \tilde{\bv{v}}_t^{+}, d_n^{-} \tilde{\bv{v}}^{-} )_{H_N^{(-)}} + 2 c^2 ( e_n^{+} \tilde{\bv{v}}_t^{+}, d_n^{+} \tilde{\bv{v}}^{+} )_{H_N^{+}},
		\end{alignedat}
	\end{equation}
	where 
	\begin{equation}
		\begin{alignedat}{2}
			E^\pm &= \snorm{\bv{v}^{\pm}_t}_{H^{\pm}} + c^2 \snorm{D^{\pm}_x \tilde{\bv{v}}^{\pm}}_{H^{\pm}} + c^2 \snorm{D^{\pm}_y \tilde{\bv{v}}^{\pm}}_{H^{\pm}} + c^2 \snorm{\tilde{\bv{v}}^{\pm}}_{R^\pm}.
		\end{alignedat}
	\end{equation}
	Since $L \tilde{\bv{w}} = L P \bv{w} = 0$ holds exactly by the definition of $P$, we have
	\begin{equation}
		e_n^{+} \tilde{\bv{v}}^{+} = 0, \quad \text{and} \quad e_n^{-} \tilde{\bv{v}}^{-} = e_s^{+} \tilde{\bv{v}}^{+}.
	\end{equation}
	Inserted into \eqref{eq: disc_en_eq_1} results in
	\begin{equation}
		\frac{d}{dt} (E^+ + E^-) = - 2c \sum_{k = w,e} \snorm{e_k^{-} \tilde{\bv{v}}_t^{-}}_{H^{-}_k} + \snorm{e_k^{+} \tilde{\bv{v}}_t^{+}}_{H^{+}_k} \leq 0,
	\end{equation}
	which is the discrete equivalent to the continuous energy equation \eqref{eq: two_block_enest} and proofs stability of \eqref{eq: disc_ODE_system}.
\end{proof}

\begin{lemma}
	\label{lemma: D_self_adj}
	The matrix $D$ in \eqref{eq: disc_operator_specs} is self-adjoint with respect to $\bar{H}$, i.e.
	\begin{equation}
		(\bv{u}, D \bv{v})_{\bar{H}} = (D \bv{u}, \bv{v})_{\bar{H}}, \quad \forall \bv{u},\bv{v} \in \mathbb{R}^N.
	\end{equation}
\end{lemma}

\begin{proof}
	Let $P \bv{u} = P \begin{bmatrix} {\bv{u}}^{+} \\ {\bv{u}}^{-} \end{bmatrix} = \tilde{\bv{u}} = \begin{bmatrix} \tilde{\bv{u}}^{+} \\ \tilde{\bv{u}}^{-} \end{bmatrix}$ and $P \bv{v} = P \begin{bmatrix} {\bv{v}}^{+} \\ {\bv{v}}^{-} \end{bmatrix} = \tilde{\bv{v}} = \begin{bmatrix} \tilde{\bv{v}}^{+} \\ \tilde{\bv{v}}^{-} \end{bmatrix}$, then
	\begin{equation}
		\begin{alignedat}{2}
			(\bv{u}, D \bv{v})_{\bar{H}} &= c^2 (\tilde{\bv{u}}^{+}, D_L^{+} \tilde{\bv{v}}^{+})_{H^{+}} + c^2 (\tilde{\bv{u}}^{-}, D_L^{-} \tilde{\bv{v}}^{-})_{H^{-}} \\
			&- c^2 \sum_{k = w,e} ( e_k^{+} \tilde{\bv{u}}^{+}, d_k^{+} \tilde{\bv{v}}^{+} )_{H_k^{+}} + ( e_k^{-} \tilde{\bv{u}}^{-}, d_k^{-} \tilde{\bv{v}}^{-} )_{H_k^{+}} \\
			&- c^2 ( e_s^{-} \tilde{\bv{u}}^{-}, d_s^{-} \tilde{\bv{v}}^{-} )_{H_s^{-}} - c^2 ( e_s^{+} \tilde{\bv{u}}^{+}, d_s^{+} \tilde{\bv{v}}^{+} + d_n^{-} \tilde{\bv{v}}^{-} )_{H_s^{+}}.
		\end{alignedat}
	\end{equation}
	Using \eqref{eq: disc_laplace_sbp_p} and \eqref{eq: disc_laplace_sbp_m} and rearranging terms lead to
	\begin{equation}
		\label{eq: disc_selfadj_1}
		\begin{alignedat}{2}
			(\bv{u}, D \bv{v})_{\bar{H}} &= - c^2 (D^{+}_x \tilde{\bv{u}}^{+}, D^{+}_x \tilde{\bv{v}}^{+})_{H^{+}} - c^2 (D^{+}_y \tilde{\bv{u}}^{+}, D^{+}_y \tilde{\bv{v}}^{+})_{H^{+}} \\
			&- c^2 (D^{-}_x \tilde{\bv{u}}^{-}, D^{-}_x \tilde{\bv{v}}^{-})_{H^{-}} - c^2 (D^{-}_y \tilde{\bv{u}}^{-}, D^{-}_y \tilde{\bv{v}}^{-})_{H^{-}} \\
			&- c^2 (\tilde{\bv{u}}^{+},\tilde{\bv{v}}^{+})_{R^{+}} - c^2 (\tilde{\bv{u}}^{-},\tilde{\bv{v}}^{-})_{R^{-}}\\
			& + c^2 ( e_n^{-} \tilde{\bv{u}}^{-} - e_s^{+} \bv{u}^{+}, d_n^{-} \tilde{\bv{v}}^{-} )_{H^{-}_n} + c^2 ( e_n^{+} \tilde{\bv{u}}^{+}, d_n^{+} \tilde{\bv{v}}^{+} )_{H^{+}_n}.
		\end{alignedat}
	\end{equation}
	Since $L P \bv{u} = 0$ hold exactly by the definition of $P$, we have
	\begin{equation}
		e_n^{+} \tilde{\bv{u}}^{+} = 0, \quad \text{and} \quad e_n^{-} \tilde{\bv{u}}^{-} = e_s^{+} \tilde{\bv{u}}^{+}.
	\end{equation}
	Inserted into \eqref{eq: disc_selfadj_1} results in
	\begin{equation}
	\label{eq: disc_selfadj_2}
		\begin{alignedat}{2}
			(\bv{u}, D \bv{v})_{\bar{H}} &= - c^2 (D^{+}_x \tilde{\bv{u}}^{+}, D^{+}_x \tilde{\bv{v}}^{+})_{H^{+}} - c^2 (D^{+}_y \tilde{\bv{u}}^{+}, D^{+}_y \tilde{\bv{v}}^{+})_{H^{+}} \\
			&- c^2(D^{-}_x \tilde{\bv{u}}^{-}, D^{-}_x \tilde{\bv{v}}^{-})_{H^{-}} - c^2 (D^{-}_y \tilde{\bv{u}}^{-}, D^{-}_y \tilde{\bv{v}}^{-})_{H^{-}} \\
			&- c^2(\tilde{\bv{u}}^{+},\tilde{\bv{v}}^{+})_{R^{+}} - c^2 (\tilde{\bv{u}}^{-},\tilde{\bv{v}}^{-})_{R^{-}}.
		\end{alignedat}
	\end{equation}
	Since all the terms on the right-hand side of \eqref{eq: disc_selfadj_2} are symmetric, i.e. we can swap $\bv{u}$ and $\bv{v}$ and obtain the same expression, we have
	\begin{equation}
		(\bv{u}, D \bv{v})_{\bar{H}} = (\bv{v}, D \bv{u})_{\bar{H}} = (D \bv{u}, \bv{v})_{\bar{H}},
	\end{equation}
	which proves the lemma.
\end{proof}

\begin{lemma}
	\label{lemma: E_self_adj}
	The matrix $E$ in \eqref{eq: disc_operator_specs} is self-adjoint with respect to $\bar{H}$, i.e.
	\begin{equation}
		(\bv{u}, E \bv{v})_{\bar{H}} = (E \bv{u}, \bv{v})_{\bar{H}}, \quad \forall \bv{u},\bv{v} \in \mathbb{R}^N.
	\end{equation}
\end{lemma}

\begin{proof}
Let $P \bv{u} = P \begin{bmatrix} {\bv{u}}^{+} \\ {\bv{u}}^{-} \end{bmatrix} = \tilde{\bv{u}} = \begin{bmatrix} \tilde{\bv{u}}^{+} \\ \tilde{\bv{u}}^{-} \end{bmatrix}$ and $P \bv{v} = P \begin{bmatrix} {\bv{v}}^{+} \\ {\bv{v}}^{-} \end{bmatrix} = \tilde{\bv{v}} = \begin{bmatrix} \tilde{\bv{v}}^{+} \\ \tilde{\bv{v}}^{-} \end{bmatrix}$, then
	\begin{equation}
	\label{eq: disc_selfadj_E}
		\begin{alignedat}{2}
			(\bv{u}, E \bv{v})_{\bar{H}} &= - c \sum_{k = w,e} ( e_k^{+} \tilde{\bv{u}}^{+} , e_k^{+} \tilde{\bv{v}}^{+} )_{H_k^{+}} + ( e_k^{-} \tilde{\bv{u}}^{-} , e_k^{-} \tilde{\bv{v}}^{-} )_{H_k^{-}}.
		\end{alignedat}
	\end{equation}
	Since all the terms on the right-hand side of \eqref{eq: disc_selfadj_E} are symmetric, i.e. we can swap $\bv{u}$ and $\bv{v}$ and obtain the same expression, we have
	\begin{equation}
		(\bv{u}, E \bv{v})_{\bar{H}} = (\bv{v}, E \bv{u})_{\bar{H}} = (E \bv{u}, \bv{v})_{\bar{H}},
	\end{equation}
	which proves the lemma.
\end{proof}


\section{The discrete optimization problem}
\label{sec: the_opt_prob}
We now return to the optimization problem \eqref{eq: cont_min_prob_multiblock}. As discussed in Section \ref{sec: intro}, when analyzing PDE-constrained optimization problems of this kind one typically has two choices. Either you derive the adjoint equations and the associated gradient of the loss functional in the continuous setting and then discretize (OD). Or, you discretize the forward problem before deriving the discrete adjoint equations and gradient (DO). Here we do a compromise and discretize in space, but leave time continuous, and then proceed with the optimization. As we shall see, in the semi-discrete setting the spatial discretization guarantees that DO and OD are equivalent, due to Lemmas \ref{lemma: D_self_adj} and \ref{lemma: E_self_adj}. With space discretized, we have the ODE-constrained optimization problem
\begin{subequations}
\label{eq: disc_min_prob}
\begin{equation}
	\label{eq: disc_loss_func}
		\min_{\bv{p}} {\mathcal{J}(\bv{w},\bv{p}) = \frac{1}{2} \int _0^T} r(\bv{w},t)^2 \: dt + \frac{1}{2} \gamma \snorm{D_2 \bv{p}}_H, \quad \text{such that} \\
\end{equation}
\begin{equation}
	\label{eq: disc_wave_eq}
	\begin{alignedat}{2}
		&\bv{w}_{tt} = D \bv{w} + E \bv{w}_t + f(t) \bv{d}_s, \quad &&t > 0\\
		&\bv{w} = \bv{w}_t = 0, &&t = 0,
	\end{alignedat}
\end{equation}
\end{subequations}
where $r(\bv{w},t)$ is the residual of the loss given by
\begin{equation}
	r(\bv{w},t) = (\bv{d}_r, \bv{w}(t))_{\bar{H}} - \bv{v}_d(t),
\end{equation}
where $\bv{v}_d(t)$ is the target data in the receiver and $\bv{d}_r$ is constructed analogously to $\bv{d}_s$. Note that the term $(\bv{d}_r, \bv{w}(t))_{\bar{H}}$ is an interpolation of $\bv{w}$ onto the receiver coordinates $\bv{x}_r$.
\subsection{Shape parameterization and regularization}
\label{subsec: shape_par_regul}
The shape of the bottom boundary can be parametrized in many ways, see \cite{samareh2001survey} for an overview of common methods. Here we let $\bv{p} = [p_1,p_2,p_3,\dots,p_{m_{\Gamma_I}}]$ be a vector containing the $y$-coordinates of the bottom for each grid point in the $x$-direction, see Figure \ref{fig: lake}. To combat the ill-posedness of \eqref{eq: cont_min_prob_multiblock} a regularization term $\frac{1}{2} \gamma \snorm{D_2 \bv{p}}_H$ is added to \eqref{eq: disc_loss_func}. This additional term penalizes spurious oscillations in $\bv{p}$ which helps us find smooth shapes (i.e. reduces the risk of getting stuck in a non-regular local minima to \eqref{eq: disc_min_prob}). The parameter $\gamma$ is chosen experimentally, a small value will give rise to oscillations and lead to a non-regular solution, whereas a large value will restrict $\bv{p}$ and lead to suboptimal solutions.
 
{Using linear transfinite interpolation, the elements in discrete column-major ordered coordinate vectors
$\bv{x}$, and $\bv{y}$ in $\Omega_p^-$ are given by
\begin{equation}
	\label{eq: disc_xy_coords}
           x_{i,j} = x_l + (x_r-x_l)\xi_i \quad \text{and} \quad y_{i,j} = p_i + (L_I - p_i) \eta_j,
\end{equation}
where $\xi_i$, $\eta_j$ are the elements in the 1D coordinate vectors in $\tilde{\Omega}_p^-$, and $L_I$ is the $y$-coordinate of the interface, here $L_I = 0.5$.} 

\subsection{The adjoint equation method}
To solve \eqref{eq: disc_min_prob} efficiently gradient-based optimization will be employed, and to this end, we require $\pder{\mathcal{J}}{p_i}, i = 1,2,...,m_{\Gamma_I}$, given by
\begin{equation}
	\label{eq: disc_naive_grad}
	\begin{alignedat}{2}
		\pder{\mathcal{J}}{p_i} &= \frac{1}{2} \int _0^T \pder{}{p_i} r(\bv{w},t)^2 \: dt + \gamma (D_2 \bv{e}_i,D_2 \bv{p})_H = \\
		&= \int _0^T  (r(\bv{w},t) \bv{d}_r , \pder{\bv{w}}{p_i})_{\bar{H}} \: dt + \gamma (D_2 \bv{e}_i,D_2 \bv{p})_H,
	\end{alignedat}
\end{equation}
where $\bv{e}_i$ is a column vector with the entry 1 at position $i$ and zeros elsewhere. Here we have used that $\pder{\bv{d}_r}{p_i} = 0$ (since the receiver is located in $\Omega^+)$. Clearly, naively computing \eqref{eq: disc_naive_grad} requires evaluating $\pder{\bv{w}}{p_i}$. Approximating $\pder{\bv{w}}{p_i}$ using first-order finite differences, e.g., would require $m_{\Gamma_I}+1$ solves of \eqref{eq:  disc_wave_eq} which quickly becomes costly for large problem sizes. Instead, we introduce a Lagrange multiplier $\bm{\lambda} \in \mathbb{R}^N$ and use the adjoint framework \cite{Giannakoglou2008,plessix2006review}, which allows us to compute $\pder{\mathcal{J}}{p_i}$ without evaluating or approximating $\pder{\bv{w}}{p_i}$. The method is summarized in the following Lemma:
\begin{lemma}
	\label{lemma: grad_formula}
	Let $\bv{w}$ be a solution to \eqref{eq: disc_wave_eq} and $\bm{\lambda}$ a solution to the adjoint equation
	\begin{equation}
		\label{eq: disc_adj_ode}
		\begin{alignedat}{2}
			&\bm{\lambda}_{\tau \tau} = D \bm{\lambda} + E \bm{\lambda}_\tau - r(\bv{w},\tau) \bv{d}_r, \quad 0 \leq \tau \leq T, \\
			&\bm{\lambda} = \bm{\lambda}_\tau = 0, \quad \tau = 0,
		\end{alignedat}
\end{equation}
where $\tau = T - t$. Then the gradient is given by
\begin{equation}
	\label{eq: disc_grad_formula}
	\pder{\mathcal{J}}{p_i} = \int_0^T -(\bm{\lambda},\pder{D}{p_i} \bv{w})_{\bar{H}} + (\bm{\lambda}_t,\pder{E}{p_i} \bv{w})_{\bar{H}} \: dt + \gamma (D_2 \bv{e}_i,D_2 \bv{p})_H.
\end{equation}
\end{lemma}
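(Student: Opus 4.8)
The plan is to run the classical adjoint-state computation, with Lemmas \ref{lemma: D_self_adj} and \ref{lemma: E_self_adj} supplying the ingredient that makes everything collapse. The starting point is the ``naive'' expression \eqref{eq: disc_naive_grad}, in which the only object that is expensive to obtain is the sensitivity $\bv{w}_i := \pder{\bv{w}}{p_i}$. First I would differentiate the forward problem \eqref{eq: disc_wave_eq} with respect to $p_i$. Because neither $f(t)$, nor $\bv{d}_s$, nor the initial data depend on $\bv{p}$ (the source sits in $\Omega^+$), this produces the linear \emph{sensitivity equation}
\begin{equation*}
	(\bv{w}_i)_{tt} = D\bv{w}_i + \pder{D}{p_i}\bv{w} + E(\bv{w}_i)_t + \pder{E}{p_i}\bv{w}_t, \qquad \bv{w}_i = (\bv{w}_i)_t = 0 \ \text{ at }\ t = 0 .
\end{equation*}

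Next I would express the adjoint problem in physical time. Setting $\tau = T - t$ in \eqref{eq: disc_adj_ode} and noting $\bm{\lambda}_\tau = -\bm{\lambda}_t$, the adjoint state, read as a function of $t$, solves the terminal-value problem $\bm{\lambda}_{tt} = D\bm{\lambda} - E\bm{\lambda}_t - r(t)\bv{d}_r$ with $\bm{\lambda}(T) = \bm{\lambda}_t(T) = 0$. Solving this identity for $r(t)\bv{d}_r$ and substituting into the integrand of \eqref{eq: disc_naive_grad} gives
\begin{equation*}
	\int_0^T (r(t)\bv{d}_r, \bv{w}_i)_{\bar{H}}\,dt = \int_0^T \big(-\bm{\lambda}_{tt} + D\bm{\lambda} - E\bm{\lambda}_t,\ \bv{w}_i\big)_{\bar{H}}\,dt .
\end{equation*}
Now I would integrate by parts in time --- twice on the $\bm{\lambda}_{tt}$ term and once on the $E\bm{\lambda}_t$ term, using that $D$, $E$ and $\bar{H}$ do not depend on $t$ --- and move $D$ and $E$ onto the sensitivity via Lemmas \ref{lemma: D_self_adj} and \ref{lemma: E_self_adj}. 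Every temporal boundary contribution vanishes: those at $t = 0$ because $\bv{w}_i = (\bv{w}_i)_t = 0$ there, and those at $t = T$ because $\bm{\lambda} = \bm{\lambda}_t = 0$ there. What remains is $\int_0^T \big(\bm{\lambda},\, -(\bv{w}_i)_{tt} + D\bv{w}_i + E(\bv{w}_i)_t\big)_{\bar{H}}\,dt$.

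By the sensitivity equation, the bracket equals $-\pder{D}{p_i}\bv{w} - \pder{E}{p_i}\bv{w}_t$; therefore
\begin{equation*}
	\int_0^T (r(t)\bv{d}_r, \bv{w}_i)_{\bar{H}}\,dt = \int_0^T \Big( -\big(\bm{\lambda}, \pder{D}{p_i}\bv{w}\big)_{\bar{H}} - \big(\bm{\lambda}, \pder{E}{p_i}\bv{w}_t\big)_{\bar{H}} \Big)\,dt .
\end{equation*}
A final integration by parts in $t$ on the last term (its boundary terms vanishing because $\bv{w} = 0$ at $t = 0$ and $\bm{\lambda} = 0$ at $t = T$) turns $-(\bm{\lambda}, \pder{E}{p_i}\bv{w}_t)_{\bar{H}}$ into $(\bm{\lambda}_t, \pder{E}{p_i}\bv{w})_{\bar{H}}$, and inserting this back into \eqref{eq: disc_naive_grad} --- whose regularization term $\gamma(D_2\bv{e}_i, D_2\bv{p})_H$ is carried along unchanged --- yields exactly \eqref{eq: disc_grad_formula}. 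I expect the only real difficulty to be bookkeeping rather than conceptual: keeping the time reversal $\tau = T - t$ consistent, checking that all of the temporal boundary terms genuinely cancel, and applying the self-adjointness lemmas at the right places --- this last point being precisely what guarantees that the discrete adjoint (DO) and the discretized continuous adjoint (OD) deliver the same gradient.
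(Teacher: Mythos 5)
Your proposal is correct and is essentially the paper's own adjoint-state argument: the same integrations by parts in time, the same cancellation of temporal boundary terms via the forward initial conditions and adjoint terminal conditions, and the same crucial use of Lemmas \ref{lemma: D_self_adj} and \ref{lemma: E_self_adj} to move $D$ and $E$ across the $\bar{H}$-inner product. The only difference is organizational: you eliminate $\pder{\bv{w}}{p_i}$ by pairing $\bm{\lambda}$ with the explicitly written sensitivity equation, whereas the paper introduces the Lagrangian and chooses $\bm{\lambda}$ so that the terms multiplying $\pder{\bv{w}}{p_i}$ vanish --- the same computation read in mirror image --- and your physical-time form of the adjoint forcing, $-r(t)\bv{d}_r$ with $\bm{\lambda}(T)=\bm{\lambda}_t(T)=0$, is exactly the interpretation of \eqref{eq: disc_adj_ode} that the paper's own proof uses.
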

\begin{proof}
First, we define the Lagrangian functional
\begin{equation}
	\mathcal{L}(\bv{w}, \bm{\lambda}) = \mathcal{J} + \int _0^T (\bm{\lambda},\bv{w}_{tt} - D \bv{w} - E \bv{w}_t - f(t) \bv{d}_s)_{\bar{H}} \: dt,
\end{equation}
and note that $\mathcal{J} = \mathcal{L}$ and thus $\pder{\mathcal{J}}{p_i} = \pder{\mathcal{L}}{p_i}$ for any $\bm{\lambda}$ whenever $\bv{w}$ is a solution to \eqref{eq: disc_wave_eq}. Consider the gradient of $\mathcal{L}$, given by
\begin{equation}
	\label{eq: disc_opt_grad_1}
	\pder{\mathcal{L}}{p_i} = \pder{\mathcal{J}}{p_i} + \int _0^T (\bm{\lambda},\pder{\bv{w}_{tt}}{p_i} - \pder{D \bv{w}}{p_i} - \pder{E \bv{w}_t}{p_i})_{\bar{H}} \: dt,
\end{equation}
where we have used that $\pder{\bv{d}_s}{p_i} = 0$ (since the source is located in $\Omega^+)$. The first term in \eqref{eq: disc_opt_grad_1} is given by \eqref{eq: disc_naive_grad}. The other terms are treated using integration by parts in time, resulting in
\begin{equation}
	\label{eq: disc_opt_grad_2}
	\begin{alignedat}{2}
		\pder{\mathcal{L}}{p_i} &= \pder{\mathcal{J}}{p_i} + \int _0^T (\bm{\lambda}_{tt},\pder{\bv{w}}{p_i})_{\bar{H}} - (\bm{\lambda},\pder{D \bv{w}}{p_i})_{\bar{H}} + (\bm{\lambda}_{t},\pder{E \bv{w}}{p_i})_{\bar{H}} \: dt \\
		&+ \left [ (\bm{\lambda},\pder{\bv{w}_t}{p_i})_{\bar{H}} - (\bm{\lambda}_t,\pder{\bv{w}}{p_i})_{\bar{H}} - (\bm{\lambda},\pder{E \bv{w}}{p_i})_{\bar{H}} \right ]^T_0 \\
		&= \pder{\mathcal{J}}{p_i} + \int _0^T (\bm{\lambda}_{tt},\pder{\bv{w}}{p_i})_{\bar{H}} - (\bm{\lambda},\pder{D \bv{w}}{p_i})_{\bar{H}} + (\bm{\lambda}_{t},\pder{E \bv{w}}{p_i})_{\bar{H}} \: dt,
	\end{alignedat}
\end{equation}
where we have used the initial conditions for $\bv{w}$ and $\bv{w}_t$ and prescribed the following terminal conditions for $\bm{\lambda}$ and $\bm{\lambda}_t$:
\begin{equation}
	\bm{\lambda} = 0, \quad \bm{\lambda}_t = 0, \quad t = T.
\end{equation}
Using \eqref{eq: disc_naive_grad} and Lemmas \ref{lemma: D_self_adj} and \ref{lemma: E_self_adj} we get
\begin{equation}
	\begin{alignedat}{2}
		\pder{\mathcal{L}}{p_i} &= \int _0^T  (r(\bv{w},t) \bv{d}_r + \bm{\lambda}_{tt} - D \bm{\lambda} + E \bm{\lambda}_t , \pder{\bv{w}}{p_i})_{\bar{H}} \: dt \\
		&+ \int_0^T -(\bm{\lambda},\pder{D}{p_i} \bv{w})_{\bar{H}} + (\bm{\lambda}_t,\pder{E}{p_i} \bv{w})_{\bar{H}} \: dt + \gamma (D_2 e_i,D_2 \bv{p})_{\bar{H}}.
	\end{alignedat}
\end{equation}
If $\bm{\lambda}$ satisfies \eqref{eq: disc_adj_ode} we get the following formula for the gradient:
\begin{equation}
	\pder{\mathcal{L}}{p_i} = \int_0^T -(\bm{\lambda},\pder{D}{p_i} \bv{w})_{\bar{H}} + (\bm{\lambda}_t,\pder{E}{p_i} \bv{w})_{\bar{H}} \: dt + \gamma (D_2 \bv{e}_i,D_2 \bv{p})_H,
\end{equation}
and since $\pder{\mathcal{L}}{p_i} = \pder{\mathcal{J}}{p_i}$ we get \eqref{eq: disc_grad_formula}.
\end{proof}
Note that the only difference between \eqref{eq: disc_adj_ode} and \eqref{eq: disc_wave_eq} is the forcing function, and hence stability of \eqref{eq: disc_adj_ode} follows immediately from Lemma \ref{lemma: ode_stability}. The matrices $\pder{D}{p_i}$ and $\pder{E}{p_i}$ can be computed analytically and are presented in \ref{sec: op_dervs}. Essentially, the matrices consist of the SBP operators together with derivatives of the metric coefficients given in Section \ref{sec: discr_omega_minus}. The derivation involves repeated application of the product rule but is otherwise straightforward.

\begin{remark}
	In this model problem, we have assumed that the source and receiver are located in $\Omega^+$, which simplifies the analysis since $\pder{\bv{d}_s}{p_i} = \pder{\bv{d}_r}{p_i} = 0$ holds. If this is not the case, one would have to evaluate the derivative of the discrete Dirac delta function, which is not well-defined everywhere for the discrete Dirac delta functions used here. In \cite{Sjogreen2014}, point source discretizations that are continuously differentiable everywhere are derived. In principle, we could use this discretization instead and allow $\bv{x}_s$ and $\bv{x}_r$ to be located in $\Omega_p^-$, but this is out of scope for the present work.
\end{remark}

\subsection{Dual consistency}
In the continuous setting it is well-established that \eqref{eq: cont_wave_eq_multiblock} is self-adjoint under time reversal \cite{gauthier_1986,plessix2006review,Almquist2020,doi:10.1190/geo2022-0195.1} such that for the continuous adjoint state variables $\lambda^\pm$ the adjoint (or dual) equations are given by
\begin{equation}
	\label{eq: cont_adj_wave_eq_multiblock}
	\begin{array}{lll}
		\lambda^+_{\tau\tau} = c^2\Delta \lambda^+ - r(u,\tau) \hat{\delta}(\bv{x} - \bv{x}_r), 	&\bv{x} \in \Omega^+, 		&\tau \in [0,T], \\
		\lambda^-_{\tau\tau} = c^2\Delta \lambda^-, 									&\bv{x} \in \Omega^-_p, 	&\tau \in [0,T], \\
		\lambda^\pm_{\tau} + c\bv{n}^\pm \cdot \nabla \lambda^\pm = 0, \quad &\bv{x} \in \del \Omega_p^{(\pm,w,e)},  &\tau \in [0,T], \\
		\bv{n}^- \cdot \nabla \lambda^- = 0, \quad &\bv{x} \in \del \Omega_p^{(-,s)}, \quad &\tau \in [0,T], \\
		\lambda^+ = 0, \quad &\bv{x} \in \del \Omega_p^{(+,n)}, \quad &\tau \in [0,T], \\
		\lambda^+ - \lambda^- = 0,												& \bv{x} \in \Gamma_I, &\tau \in [0,T], \\
		n^+ \cdot \nabla \lambda^+ + n^- \cdot \nabla \lambda^- = 0,			& \bv{x} \in \Gamma_I, &\tau \in [0,T], \\
		\lambda^\pm = 0,  \quad \lambda^\pm_\tau = 0, 										&\bv{x} \in \Omega, 		&\tau = 0. \\
	\end{array}
\end{equation}
Note that the semi-discrete adjoint problem \eqref{eq: disc_adj_ode} is a consistent approximation of \eqref{eq: cont_adj_wave_eq_multiblock}, and thus \eqref{eq: disc_wave_eq} is a dual consistent semi-discretization of the forward problem \cite{berg_nordstrom_2012,HICKEN2014161}. Moreover, the gradient to the continuous optimization problem \eqref{eq: cont_min_prob_multiblock} is given by the following lemma:
\begin{lemma}\label{lemma: cont_grad}
	Let $\lambda^-$ satisfy \eqref{eq: cont_adj_wave_eq_multiblock}. Then the gradient $\frac{\delta \mathtt{J}}{\delta p}$ to \eqref{eq: cont_min_prob_multiblock}, is given by
	\begin{equation}\label{eq: cont_grad}
		\begin{aligned}
		\frac{\delta \mathtt{J}}{\delta p} = \int_{0}^{T} G_{\lambda} + G_{\lambda_t} dt,
		\end{aligned}
	\end{equation}
	where
	\begin{equation}\label{eq: cont_grad_lambda}
		\begin{aligned}
		G_{\lambda} = -c^2\Big( -&(\lambda^-, J^{-1}\frac{\delta J}{\delta p}\Delta u^-)_{\Omega^- }  \\
								+&(\lambda^-,J^{-1}((\frac{\delta \alpha_1}{\delta p} u^-_\xi + \frac{\delta \beta}{\delta p} u^-_\eta)_\xi + (\frac{\delta \alpha_2}{\delta p} u^-_\eta + \frac{\delta \beta}{\delta p} u^-_\xi)_\eta))_{\Omega^- } \\
		-&\langle \lambda^-, W_1^{-1}(\frac{\delta \alpha_1}{\delta p}u^-_\xi + \frac{\delta\beta}{\delta p} u^-_\eta )\rangle_{\partial\Omega^{(-,e)} } \\
		+&\langle \lambda^-, W_1^{-1}(\frac{\delta \alpha_1}{\delta p}u^-_\xi + \frac{\delta\beta}{\delta p} u^-_\eta )\rangle_{\partial\Omega^{(-,w)} } \\
		-&\langle\lambda^-, W_2^{-1}(\frac{\delta\alpha_2}{\delta p}u^-_\eta + \frac{\delta\beta}{\delta p} u^-_\xi )\rangle_{\partial\Omega^{(-,n)}}\\
		+&\langle\lambda^-, W_2^{-1}(\frac{\delta\alpha_2}{\delta p}u^-_\eta + \frac{\delta\beta}{\delta p} u^-_\xi )\rangle_{\partial\Omega^{(-,s)} } \Big),
		\end{aligned}
	\end{equation}
	and
	\begin{equation}\label{eq: cont_grad_lambda_t}
		\begin{aligned}
			G_{\lambda_t} = &c\Big(\langle \lambda^-_t, W_1^{-1}\frac{\delta W_1}{\delta p} u^- \rangle_{\partial \Omega^{(-,e)}} + \langle \lambda^-_t, W_1^{-1}\frac{\delta W_1}{\delta p} u^- \rangle_{\partial \Omega^{(-,w)}}\Big),
		\end{aligned}
	\end{equation}
	with the metric coefficients $J$, $\alpha_1$, $\alpha_2$, $\beta$, $W_1$ and $W_1$, given in Section \ref{sec: cont_analysis}.
\end{lemma}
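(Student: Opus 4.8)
The plan is to mirror the semi-discrete derivation of Lemma~\ref{lemma: grad_formula}, but carried out for the continuous problem \eqref{eq: cont_min_prob_multiblock} and using the coordinate map of Section~\ref{sec: cont_analysis} to localize the $p$-dependence. First I would introduce the Lagrangian
\[
\mathcal{L} = \mathtt{J} + \int_0^T (\lambda^+, u^+_{tt} - c^2\Delta u^+ - f\hat\delta(\bv{x}-\bv{x}_s))_{\Omega^+}\,dt + \int_0^T (\lambda^-, u^-_{tt} - c^2\Delta u^-)_{\Omega^-_p}\,dt,
\]
with $u^\pm$ restricted to satisfy the boundary, interface and initial conditions in \eqref{eq: cont_wave_eq_multiblock}, so that $\mathcal{L}=\mathtt{J}$ and hence $\frac{\delta\mathtt{J}}{\delta p}=\frac{\delta\mathcal{L}}{\delta p}$ for any admissible $\lambda^\pm$. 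Since $\Omega^+$ and all of its data are independent of $p$, the control enters only through $u^\pm$ and, after rewriting the $\Omega^-_p$-integral over the fixed reference domain $\tilde{\Omega}^-$ through $(u^-,v^-)_{\Omega^-_p}=(u^-,Jv^-)_{\tilde{\Omega}^-}$ and the transformed Laplacian \eqref{eq: curv_laplace}, through the metric coefficients $J,\alpha_1,\alpha_2,\beta,W_1,W_2$. Differentiating $\mathcal{L}$ with respect to $p$ then produces two groups of terms: (a) terms containing $\frac{\delta u^\pm}{\delta p}$ and its time derivatives, and (b) terms in which $\frac{\delta}{\delta p}$ falls on a metric coefficient, leaving $\lambda^-$ and $u^-$ (and their derivatives) paired against $\frac{\delta J}{\delta p}$, $\frac{\delta\alpha_i}{\delta p}$, $\frac{\delta\beta}{\delta p}$ or $\frac{\delta W_i}{\delta p}$.

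Group~(a) is handled exactly as in the proof of Lemma~\ref{lemma: grad_formula}: integrate by parts twice in time, using the homogeneous initial data for $u^\pm$ and imposing the terminal conditions $\lambda^\pm=\lambda^\pm_\tau=0$ at $t=T$, and apply Green's first identity in space on $\Omega^+$ and on $\tilde{\Omega}^-$. The boundary integrals produced by Green's identity are eliminated in the standard way: substitute the $p$-differentiated forward boundary and interface conditions for the normal derivatives of $\frac{\delta u^\pm}{\delta p}$, then use the matched adjoint boundary and interface conditions of \eqref{eq: cont_adj_wave_eq_multiblock}, together with one further integration by parts in time for the first-order outflow conditions, to cancel what remains; this is the continuous counterpart of Lemmas~\ref{lemma: D_self_adj} and \ref{lemma: E_self_adj}. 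What is left of group~(a) is the bulk pairing of $\frac{\delta u^\pm}{\delta p}$ with the time-reversed adjoint residual, which together with the term obtained by differentiating the misfit in $\mathtt{J}$ (supported at the receiver, hence a multiple of $r(t)\hat\delta(\bv{x}-\bv{x}_r)$) forces $\lambda^\pm$ to satisfy \eqref{eq: cont_adj_wave_eq_multiblock}; with that choice group~(a) vanishes identically.

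It then remains to assemble group~(b). The two interior integrals of \eqref{eq: cont_grad_lambda} come directly from differentiating the transformed equation: the $\frac{\delta J}{\delta p}$ term from $\frac{\delta}{\delta p}(\lambda^-,Ju^-_{tt})_{\tilde{\Omega}^-}$ after inserting $u^-_{tt}=c^2\Delta u^-$, and the term $(\frac{\delta\alpha_1}{\delta p}u^-_\xi+\frac{\delta\beta}{\delta p}u^-_\eta)_\xi+(\frac{\delta\alpha_2}{\delta p}u^-_\eta+\frac{\delta\beta}{\delta p}u^-_\xi)_\eta$ from $\frac{\delta}{\delta p}$ of the divergence-form Laplacian in \eqref{eq: curv_laplace}. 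The boundary integrals of \eqref{eq: cont_grad_lambda} over $\partial\Omega^{(-,w,e,s,n)}$ are the metric-variation remainders left behind when group~(a) is cancelled: on $\partial\Omega^{(-,s)}$ (homogeneous Neumann) and on the $\Omega^-$ side of the interface $\partial\Omega^{(-,n)}$ they come from differentiating $\bv{n}^-\cdot\nabla u^-=(\alpha_2 u^-_\eta+\beta u^-_\xi)/W_2$ in \eqref{eq: cont_curv_norms}, and on $\partial\Omega^{(-,w,e)}$ from differentiating $\bv{n}^-\cdot\nabla u^-=(\alpha_1 u^-_\xi+\beta u^-_\eta)/W_1$; the $W_1$-dependence in the latter, combined with the outflow condition $u^-_t+c\,\bv{n}^-\cdot\nabla u^-=0$, also generates a term proportional to $\langle\lambda^-,W_1^{-1}\frac{\delta W_1}{\delta p}u^-_t\rangle$ on $\partial\Omega^{(-,w,e)}$, and one more integration by parts in time — with the endpoint terms vanishing by the initial and terminal data — moves the derivative onto $\lambda^-$ and yields exactly $G_{\lambda_t}$ in \eqref{eq: cont_grad_lambda_t}. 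Summing group~(b) and using $\frac{\delta\mathtt{J}}{\delta p}=\frac{\delta\mathcal{L}}{\delta p}$ then gives \eqref{eq: cont_grad}. I expect the main obstacle to be the boundary bookkeeping in group~(a): one must verify that every boundary integral over the fixed parts of $\partial\Omega_p$ — the top Dirichlet boundary $\partial\Omega^{(+,n)}$, the outflow sides of $\Omega^+$, and the interface $\Gamma_I$, where continuity of $u$ and of $\lambda$ together with the matched flux conditions is used — cancels, while the contributions that survive on the moving boundary $\partial\Omega^{(-,w,e,s)}$ and on the $\Omega^-$ side of $\Gamma_I$ regroup precisely into $G_\lambda$ and $G_{\lambda_t}$; the sign and orientation conventions in Green's identity and in the substitution $\tau=T-t$ must be tracked carefully throughout.
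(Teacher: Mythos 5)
Your proposal is correct and follows essentially the same route as the paper's proof in \ref{sec: cont_grad_proof}: form the Lagrangian, pull the $\Omega^-_p$ terms back to the reference domain so the $p$-dependence sits in the metric coefficients, cancel all $\delta u^\pm$ contributions via the adjoint problem \eqref{eq: cont_adj_wave_eq_multiblock} (including the interface and outflow bookkeeping), and collect the metric-variation terms into $G_\lambda$ and, after the time integration by parts tied to the outflow condition, $G_{\lambda_t}$. The only difference is ordering — the paper integrates by parts before taking the variation while you vary the strong transformed form first — which is immaterial to the result.
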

\begin{proof}
	See \ref{sec: cont_grad_proof}.
\end{proof}
Lemma \ref{lemma: cont_grad} is the continuous counterpart of Lemma \ref{lemma: grad_formula}, where \eqref{eq: disc_grad_formula} is the semi-discrete version of \eqref{eq: cont_grad}, with the addition of regularization. This is seen by comparing the explicit expressions for the operator derivatives presented in \ref{sec: op_dervs} to \eqref{eq: cont_grad_lambda} - \eqref{eq: cont_grad_lambda_t} where $G_{\lambda} \approx (\bm{\lambda}, \frac{\partial D}{\partial p_i} \bv{w})_{\bar{H}}$ and $G_{\lambda_t} \approx (\bm{\lambda}_t,\frac{\partial E}{\partial p_i} \bv{w})_{\bar{H}}$.

\subsection{Temporal discretization}
For time discretization we use the 4th-order explicit Runge--Kutta method (RK4) and for computing the time integrals a 6th-order accurate SBP quadrature. {The time step is chosen as
\begin{equation}
\Delta t = k \Delta t_{max},
\end{equation}
where 
\begin{equation}
\Delta t_{max} = \frac{2.8}{\sqrt{\rho(D)}},
\end{equation}
is approximately the stability limit of RK4 applied to \eqref{eq: disc_ODE_system} (written on first-order form), $\rho(D)$ is the spectral radius of $D$, and $k < 1$ is a positive CFL constant. The influence of $E$ on the stability limit $\Delta t_{max}$ is in this case very small since it scales as $h^{-1}$, where $h$ is the spatial step size, compared to $D$ that scales as $h^{-2}$. 

Since the temporal discretization used here is not self-adjoint, the computed gradients will contain an approximation error. In Section \ref{subsec: bath} we verify the accuracy of the gradient and show that it decreases with $k$. For the numerical experiments presented in this paper, we use $k = 0.1$, which seems to be sufficient for the optimization algorithm to work well on the problems we solve. There exist self-adjoint temporal discretization schemes that would lead to exact gradients, e.g., symplectic Runge--Kutta methods \cite{sanz_serna} or SBP in time \cite{nordstrom_lundquist_2016} where the temporal derivatives are also approximated using SBP operators, but implementing these are out of scope in the present work.
}
\subsection{Optimization algorithm}
There are many optimization algorithms that could be employed for these types of problems. Here we use the BFGS algorithm \cite{10.1093/imamat/6.1.76} as implemented in the Matlab function \emph{fminunc}, which is a quasi-newton method that approximates the Hessian using only the gradient of the loss function. For larger problems (e.g., with higher grid resolutions or 3D problems) the L-BFGS method would be a suitable alternative. Each iteration of the BFGS method requires the loss $\mathcal{J}$ and the gradient $\nabla_\bv{p} \mathcal{J}$, which are computed as follows:

\begin{enumerate}
	\item Solve the forward problem \eqref{eq: disc_wave_eq}.
	\item Compute the loss \eqref{eq: disc_loss_func} using the solution to the forward problem.
	\item Solve the adjoint problem \eqref{eq: disc_adj_ode} using the solution to the forward problem.
	\item For each $i = 1,2,...,m_{\Gamma_I}$, compute $\pder{\mathcal{J}}{p_i}$ using \eqref{eq: disc_grad_formula} and form the gradient vector
	\begin{equation}
		\nabla_\bv{p} \mathcal{J} = \left [\pder{\mathcal{J}}{p_1},\pder{\mathcal{J}}{p_2},...,\pder{\mathcal{J}}{p_{m_{\Gamma_I}}} \right ].
	\end{equation}
\end{enumerate}
Note that we only have to solve the forward and adjoint problems one time each per iteration, independently of the number of optimization parameters $m_{\Gamma_I}$. This is one of the main advantages of the adjoint method. {However, evaluating \eqref{eq: disc_grad_formula} in step 4 requires storing the vectors $\bv{w}$, $\bm{\lambda}$, and $\bm{\lambda}_t$ for all time steps, which naturally may be very memory consuming, especially for long-time simulations in realistic 3D applications. In such cases, one often must resort to check-pointing (storing the vectors at certain time intervals and recomputing intermediate results) or storing the vectors on disk.}

\section{Numerical experiments}
\label{sec: num_exp}
\subsection{Accuracy study}
Since the SBP-P-SAT method is new in this setting (wave equations for multiblock, curvilinear domains), we briefly verify the accuracy of the method by performing a convergence study on the forward problem with a known analytical solution. We consider a circular domain $\Omega$ decomposed into five blocks as depicted in Figure \ref{fig: circ}, and solve the PDE
\begin{equation}
	\label{eq: cont_accuracy_pde}
	\begin{alignedat}{4}
		u_{tt} &=  \Delta u,  &&\bv{x} \in \Omega,  &&t \in [0,T], \\
		u &= g(\bv{x},t), \quad&&\bv{x} \in \del \Omega, \quad&&t \in [0,T], \\
		u &= u_0(\bv{x}),  &&\bv{x} \in \Omega, &&t = 0, \\
		u_t &= 0,  &&\bv{x} \in \Omega, &&t = 0, \\
	\end{alignedat}
\end{equation}
where $g(\bv{x},t)$ and $u_0(\bv{x})$ are chosen so that $u$ satisfies the standing-wave solution
\begin{equation}
	u(\bv{x},t) = \sin(3 \pi x) \sin(4 \pi y) \cos(5 \pi t).
\end{equation}
\begin{figure}[!htbp]
	\centering
	\includegraphics[width=0.5\textwidth]{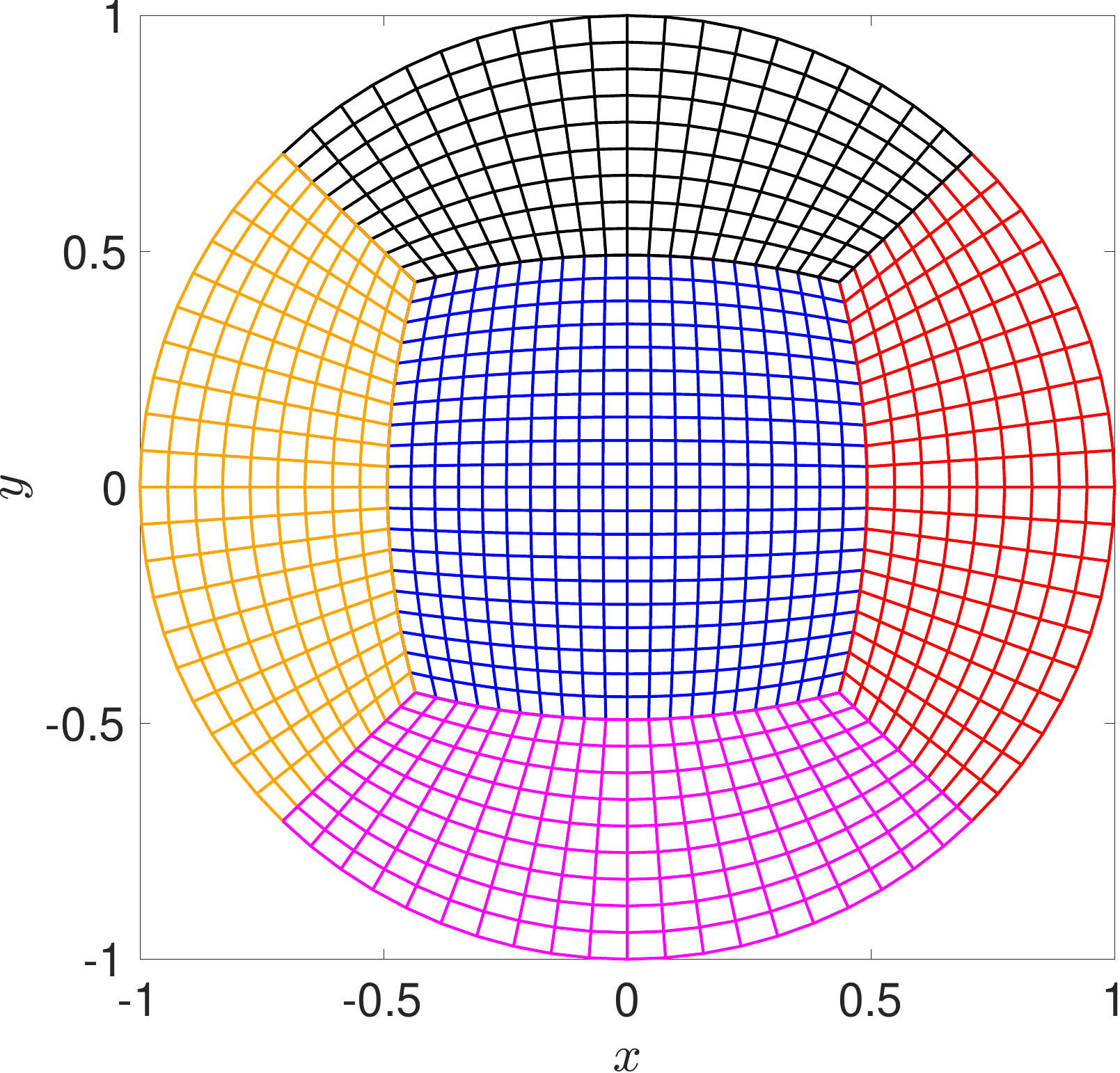}
	\caption{Grid of the circular domain. The different colors of the grid indicate the block decomposition.}
	\label{fig: circ}
\end{figure}

The problem \eqref{eq: cont_accuracy_pde} is discretized using the SBP-P-SAT method described in Section \ref{sec: forward_problem}, with interface conditions imposed using the hybrid projection and SAT method and the Dirichlet boundary conditions imposed using the projection method. 

In Table \ref{tabl: errconv} the $L_2$-error and convergence rate at final time $T = 1$ is presented for the 4th- and 6th-order accurate SBP operators, where each row corresponds to a different number of total grid points $N$. With the 4th-order accurate operators we obtain the convergence rate 4 while with the 6th-order accurate operators the convergence rate is slightly above 5, which is in line with previous observations \cite{Mattsson2006}.
\begin{table}
	\centering
	\caption{$L_2$-error and convergence rate with varying total degrees of freedom $N$ for standing-wave problem with 4th- and 6th-order accurate SBP operators.}
	\label{tabl: errconv}
	\begin{tabular}{|c||c|c||c|c|} 
	\hline
	$N$ & $\log_{10}(e_4)$ & $q_4$ & $\log_{10}(e_6)$ & $q_6$\\ 
	\hline
	4797 &	-2.98 &	- & -3.25 &	- \\
	\hline
	10553 &	-3.71 &	-4.29 & -4.24 &	-5.77 \\
	\hline
	18549 &	-4.22 &	-4.18 & -4.94 &	-5.68 \\
	\hline
	28381	& -4.61 &	-4.21 & -5.47 &	-5.77 \\
	\hline
	40777 &	-4.93 &	-4.10  & -5.86	& -5.04 \\
	\hline
	72289 &	-5.44 &	-4.08 & -6.57 &	-5.68 \\
	\hline
	112761 & -5.83 & -4.05 & -7.08 &	-5.30 \\
	\hline
	\end{tabular}
\end{table}
\subsection{Bathymetry optimization} 
\label{subsec: bath}
We now consider the optimization problem \eqref{eq: cont_min_prob_multiblock}. The domain is given by Figure \ref{fig: lake} and the wave speed is $c = 1$. The source is located at $[0.25,0.8]$ with the time-dependent function given by the Ricker wavelet function
\begin{equation}
	f(t) = \frac{2}{\sqrt{3\sigma} \pi^{1/4}} \left ( 1 - \left ( \frac{t}{\sigma} \right)^2 \right) e^{-\frac{t^2}{2\sigma^2}},
\end{equation}
with $\sigma = 0.1$. The receiver is located at $[0.75,0.8]$. We use synthetic receiver data, produced by simulating the forward problem with a bottom boundary (seabed) given in Figure \ref{fig: lake}, using the 6th-order accurate operators with $m^+_\xi = m^-_x = 401$ and $m^+_\eta = m^-_y = 201$ (total degrees of freedom $N = 161202$). In the optimization, we use the 4th-order operators and $m^+_\xi = m^-_x = 41$ and $m^+_\eta = m^-_y = 21$ (total degrees of freedom $N = 1722$). Linear interpolation of the synthetic receiver data is used when there is no data matching the time level of the numerical solution. The final time is set to $T = 4$ and the regularization parameter to $\gamma = 10^{-5}$. The initial guess of the seabed is chosen as a straight line, i.e. $\bv{p} = [0,0,...,0]$.

\begin{figure}[!htbp]
	\centering
	\includegraphics[width=0.5\textwidth]{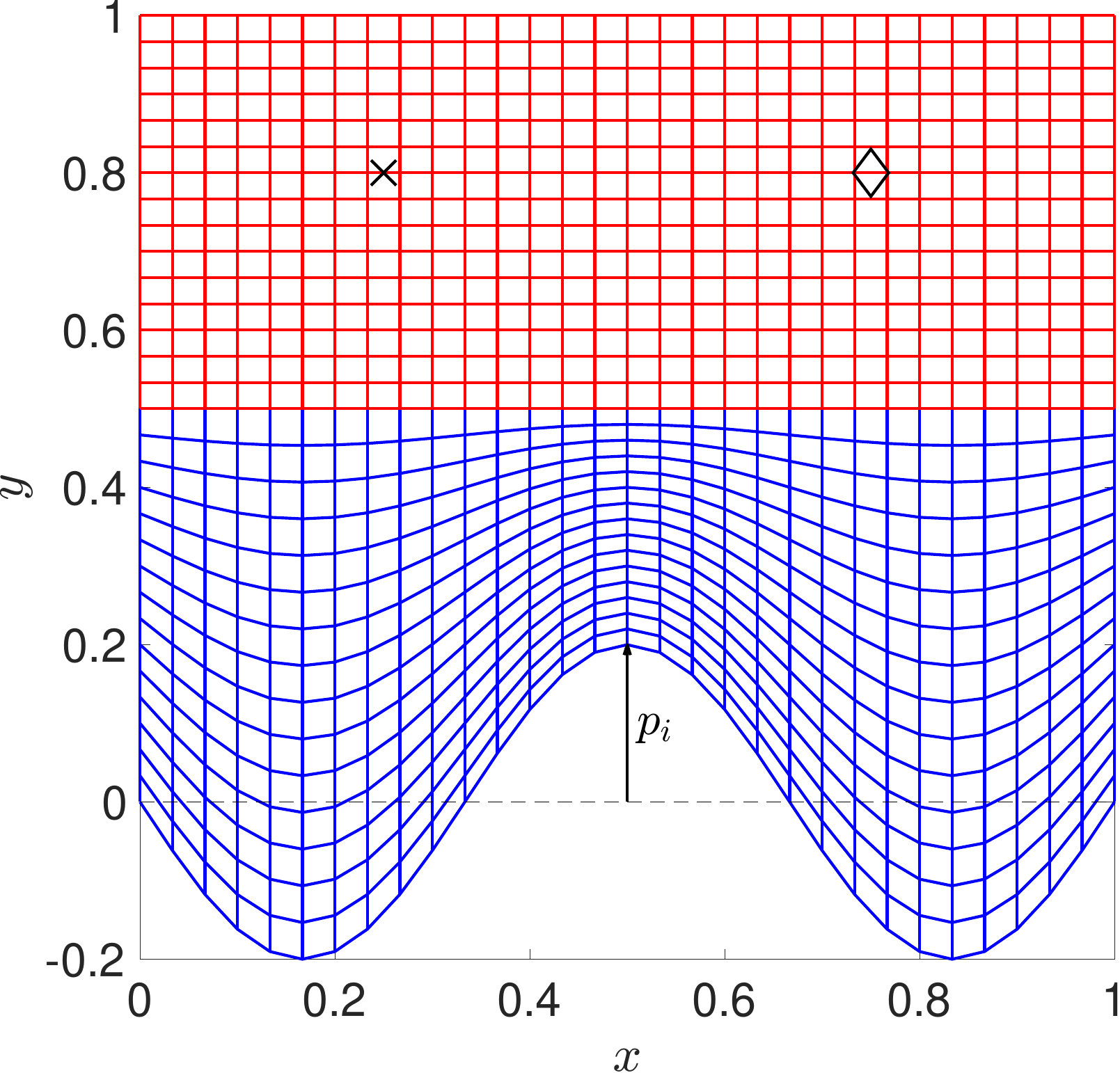}
	\caption{Discretization of bathymetry problem with parameterization of the seabed. The different colors of the grid indicate the block decomposition. The cross ($\bm{\times}$) and diamond ($\bm{\Diamond}$) indicate the location of the source and the receiver, respectively.}
	\label{fig: lake}
\end{figure}

{
We begin by investigating the accuracy of the gradient \eqref{eq: disc_grad_formula} by comparing it to the more common, but expensive, finite difference approach. Let 
\begin{equation}
 	(D^+ \mathcal{J}(\bv{p}))_i = \frac{\mathcal{J}(\bv{p} + \Delta p \bv{e}_i) - \mathcal{J}(\bv{p})}{\Delta p},
\end{equation} 
denote the $i$:th component of the gradient vector approximated using first-order finite differences, where $\bv{e}_i$ is the vector with a 1 at the $i$:th element and zeroes elsewhere and $\Delta p$ is the step size of finite difference approximation. Further, let
\begin{equation}
	e(\Delta p) = \frac{\norm{\nabla_\mathbf{p} \mathcal{J}(\bv{p})-D^+ \mathcal{J}(\bv{p})}_2}{\norm{\nabla_\mathbf{p} \mathcal{J}(\bv{p})}_2},
\end{equation}
denote the relative $L_2$-error of the finite difference gradient assuming that $\nabla_\mathbf{p} \mathcal{J}(\bv{p})$ is exact. In Figure \ref{fig: graderr}, the error $e(\Delta p)$ is plotted against $\Delta p$ for various CFL constants $k$. The results show that the error initially decreases with a first-order rate, but for small enough $\Delta p$ it plateaus at a constant error that decreases for smaller time steps. Hence we have a discretization error in the gradient arising from the temporal discretization. For $k=0.1$, which is what we use in the optimization, the relative error in the gradient is approximately 1\%. For $k = 0.001$ and $k = 0.0001$ and very small $\Delta p$ we also start seeing the effects of cancellation errors in the finite difference gradient.  
\begin{figure}[!htbp]
	\centering
	\includegraphics[width=0.6\textwidth]{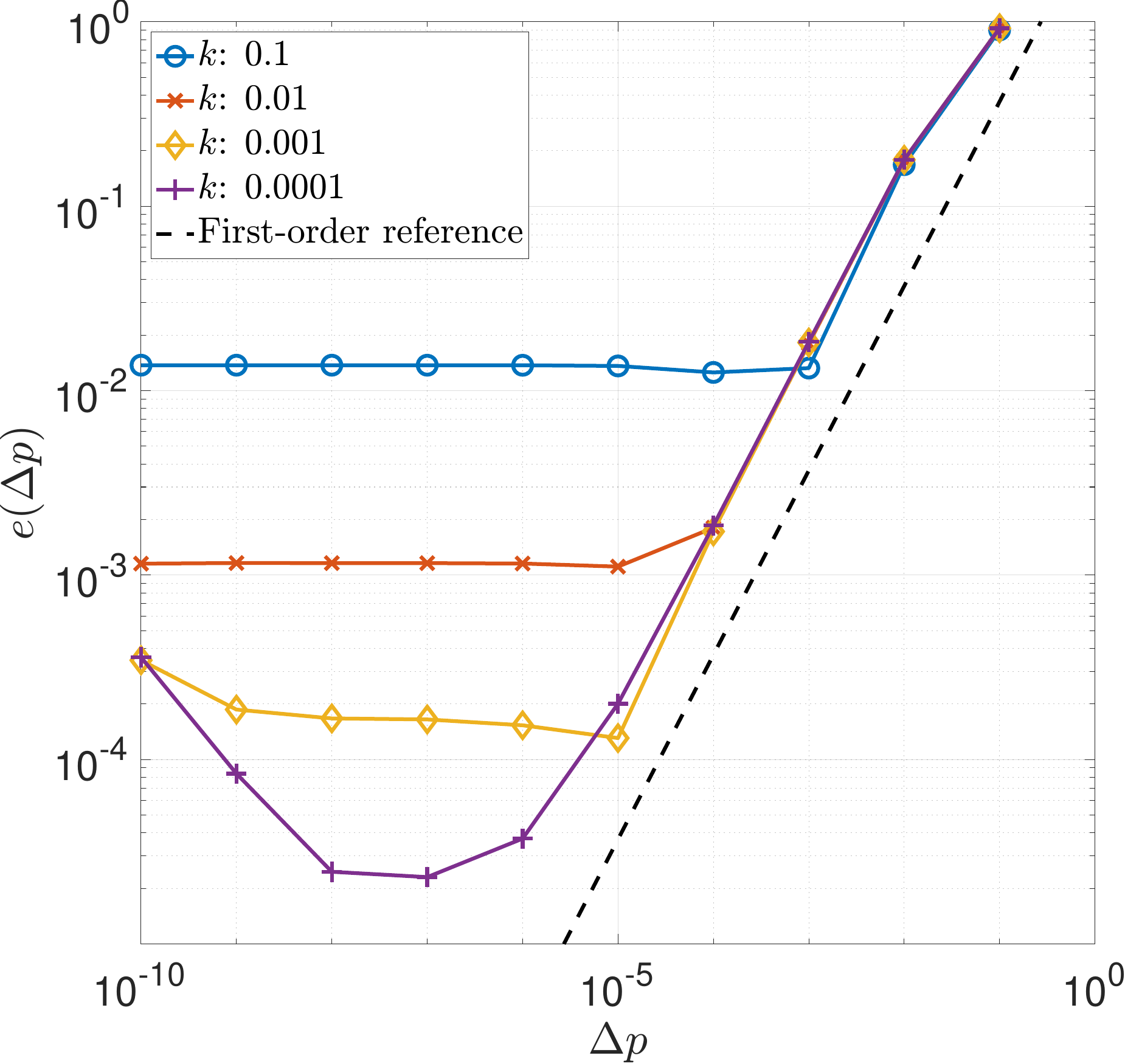}
	\caption{Relative error between gradient approximated using finite differences and gradient computed using the adjoint method for varying CFL constants $k$.}
	\label{fig: graderr}
\end{figure}

We now turn to the actual optimization. In Figure \ref{fig: lake_snaps} the shape of the seabed after 0, 5, 20, and 177 iterations are presented. After 177 iterations the optimization stops with the chosen tolerance $10^{-6}$. We can conclude that the method manages to reconstruct the shape of the seabed accurately with only one source and one receiver, and that a 1\% error in the gradient seems sufficiently small for this problem.}
\begin{figure}[!htbp]
	\centering
	\includegraphics[width=0.6\textwidth]{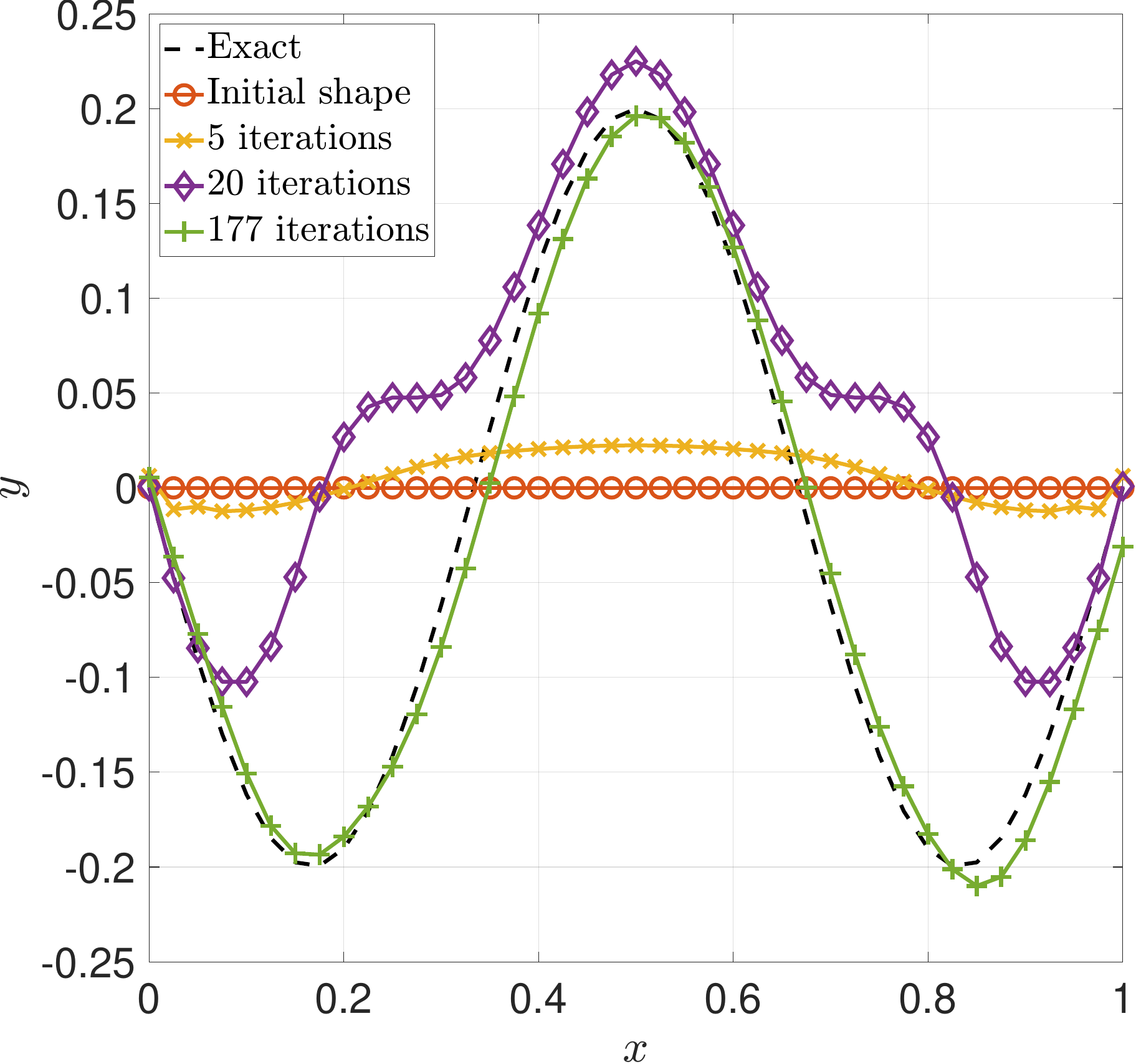}
	\caption{Shape of the seabed after 0, 5, 20, and 177 iterations.}
	\label{fig: lake_snaps}
\end{figure}
\subsection{Air horn shape optimization}
{For the final numerical example, we use a similar setup as in \cite{BANGTSSON20031533}, where the shape of the mouth of an acoustic horn is optimized to minimize the reflected sound, see Figure \ref{fig: horn_full}. The problem setup, including the domain decomposition and boundary conditions, is presented in Figure \ref{fig: horn_disc}. On the walls of the horn, $\Gamma_W$ and $\Gamma_{H,p}$, we prescribe fully reflecting homogeneous Neumann boundary conditions. At the outflow $\Gamma_O$ we use first-order accurate outflow boundary conditions. To reduce the computational costs by half, we utilize the horizontal symmetry and prescribe symmetry boundary conditions (homogeneous Neumann conditions) at $\Gamma_S$. Note that $\Gamma_{H,p}$ is parametrized by the optimization parameter $p$. At the inflow boundary $\Gamma_I$, a time-dependent inflow function is prescribed using inhomogeneous Dirichlet boundary conditions. The wave speed is $c = 340$ m/s and the final time $T = 0.05$ s.
\begin{figure}[!htbp]
	\centering
	\includegraphics[width=0.51\textwidth]{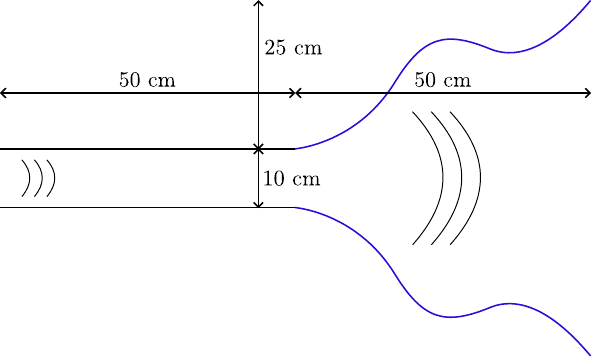}
	\caption{Dimensions of the acoustic horn. The blue section of the horn is subject to shape optimization.}
	\label{fig: horn_full}
\end{figure}

\begin{figure}[!htbp]
	\centering
    \begin{subfigure}[b]{0.9\textwidth}
         \centering
         \includegraphics[width=\textwidth]{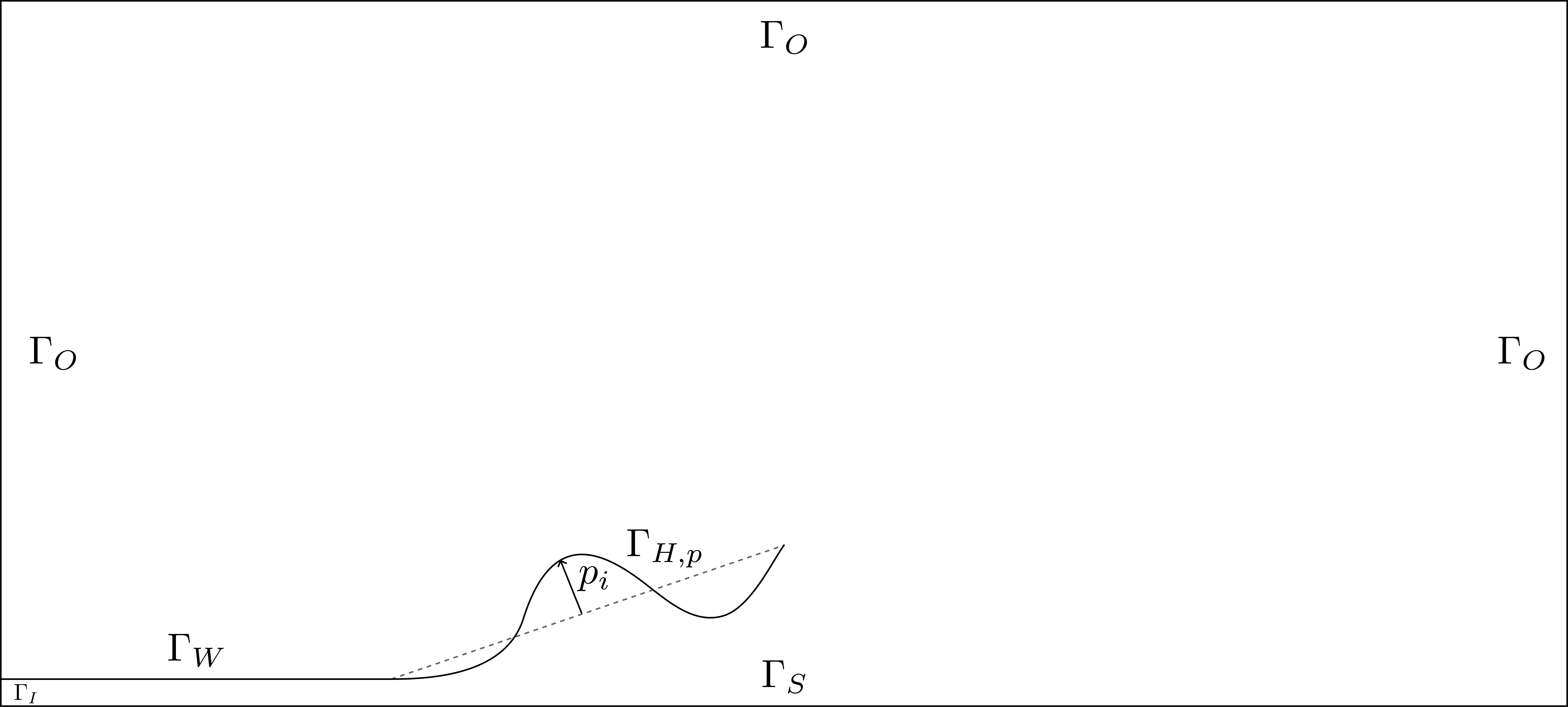}
         \caption{Boundary conditions and horn parameterization.}
         \label{fig: horn_disc_bc}
     \end{subfigure}
     \\ 
     \vspace{1cm}
     \begin{subfigure}[b]{0.965\textwidth}
     	\hspace{-0.49cm}
         \includegraphics[width=\textwidth]{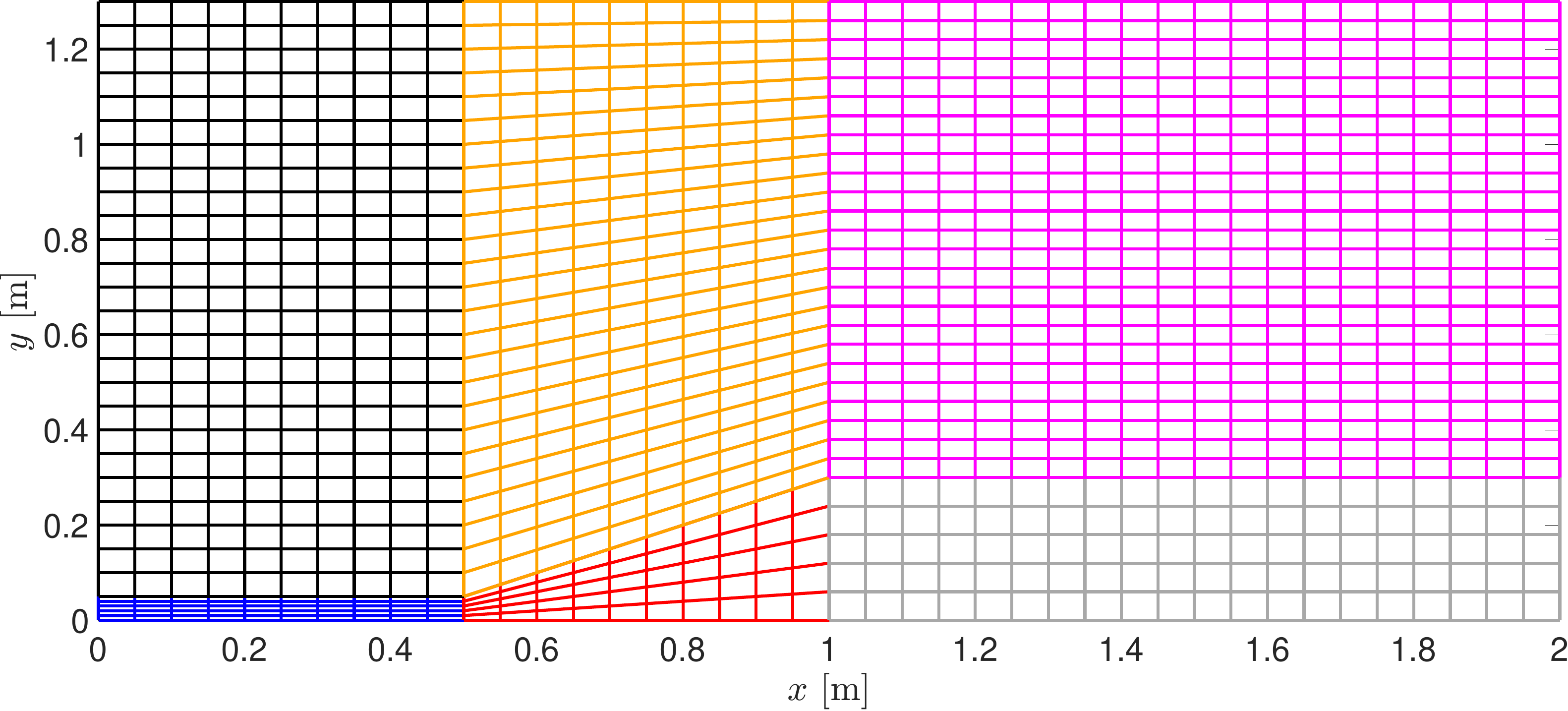}
         \caption{Coarse grid of the acoustic horn problem. The different colors indicate the block decomposition.}
         \label{fig: horn_disc_grid}
     \end{subfigure}
     \caption{Discretization of the acoustic horn problem.}
     \label{fig: horn_disc}
\end{figure}
The main difference between \cite{BANGTSSON20031533} and the present work is that we solve the acoustic wave equation in the time domain, whereas in \cite{BANGTSSON20031533} the optimization is performed in the frequency domain by solving Helmholtz equation. The advantage of our approach is that we can use any time-dependent inflow function we wish with very small effects on the computational costs, whereas in the frequency domain the Helmholtz equation must be solved one frequency at a time for complex signals. However, for simple inflow signals, the approach taken in \cite{BANGTSSON20031533} is likely to be the more computational efficient option. In this work we present results for two inflow functions at $\Gamma_I$, first a single-frequency wave and then a superposition of many sinusoidal waves with different frequencies.

Designing suitable objective functions for optimization problems is a non-trivial task. In \cite{BANGTSSON20031533}, a specific inflow boundary condition is derived that allows for straightforward computation of reflected waves, i.e. the left-going waves in the horn. In the optimization process, the mouth of the horn is modified to minimize the amplitudes of these waves. Here we take inspiration from \cite{BANGTSSON20031533} and design an objective function that also indicates left-going waves in the horn, but adapted for time-domain discretizations. This is achieved by minimizing the outgoing characteristic $u_t - c\bv{n} \cdot \nabla u$ integrated over the inflow boundary $\Gamma_I$ and time. Assuming that the flow in the channel from the inflow boundary to the mouth of the horn is predominately one-dimensional, the boundary integral $\|u_t - c\bv{n} \cdot \nabla u\|_{\Gamma_I}^2$ measures the outgoing energy flux at $\Gamma_I$. Therefore, minimizing $\int_0^T \|u_t - c\bv{n} \cdot \nabla u\|_{\Gamma_I}^2 dt$, minimizes the energy attributed to reflected waves in the horn. The unregularized optimization problem in the continuous setting then reads
\begin{subequations}
\label{eq: cont_min_prob_horn}
\begin{equation}
	\label{eq: cont_min_prob_horn_loss}
	\begin{alignedat}{2}
		\min_{p} \mathtt{J}(u,p) &= \frac{1}{2} \int _0^T \snorm{u_t - c\bv{n} \cdot \nabla u}_{\Gamma_I} \: dt, \\		
	\end{alignedat}	
\end{equation}
such that
\begin{equation}
	\label{eq: cont_wave_eq_horn}
	\begin{alignedat}{4}
		u_{tt} &=  c^2\Delta u, &&\bv{x} \in \Omega_p, &&t \in [0,T], \\
		\bv{n} \cdot \nabla u &= 0, \quad && \bv{x} \in \Gamma_W, \Gamma_{H,p}, \Gamma_S, \quad && t \in [0,T], \\
		u &= g(t), \quad && \bv{x} \in \Gamma_I, \quad && t \in [0,T], \\
		u_t + c\bv{n} \cdot \nabla u &= 0, \quad && \bv{x} \in \Gamma_O, \quad && t \in [0,T], \\
		u &= 0, \quad u_t = 0, \quad &&\bv{x} \in \Omega_p, &&t = 0,
	\end{alignedat}
\end{equation}
\end{subequations}
where $g(t)$ is the inflow function.

The shape of the mouth of the horn is parametrized by the deviation from a straight line between the two fixed points $(0.5,0.05)$ and $(1,0.3)$, see Figure \ref{fig: horn_disc_bc}. The problem \eqref{eq: cont_min_prob_horn} is discretized using the SBP-P-SAT method described in Section \ref{sec: forward_problem}, generalized to the multiblock setting in Figure \ref{fig: horn_disc}. As for the bathymetry problem, the adjoint problem to \eqref{eq: cont_wave_eq_horn} is derived which allows for efficient computation of the gradient of the loss function $\mathcal{J}$. To regularize the optimization problem we add an additional term to $\mathcal{J}$ as described in Section \ref{subsec: shape_par_regul}, with regularization parameter $\gamma = 10^{-7}$. To retain space we refrain from presenting the discretization in detail.

Two numerical experiments are performed to evaluate the method. First, we consider the problem with a single frequency $f = 300$ Hz, and inflow function
\begin{equation}
	g(t) = w(t) \sin(2 \pi f t),
\end{equation}
where $w(t)$ is a tapering window function shown in Figure \ref{fig: horn_window} \cite{McKechan_2010}. The window function is chosen so that at least five periods of the input signal with maximum amplitude are prescribed at the inflow.
\begin{figure}[!htbp]
	\centering
	\includegraphics[width=0.5\textwidth]{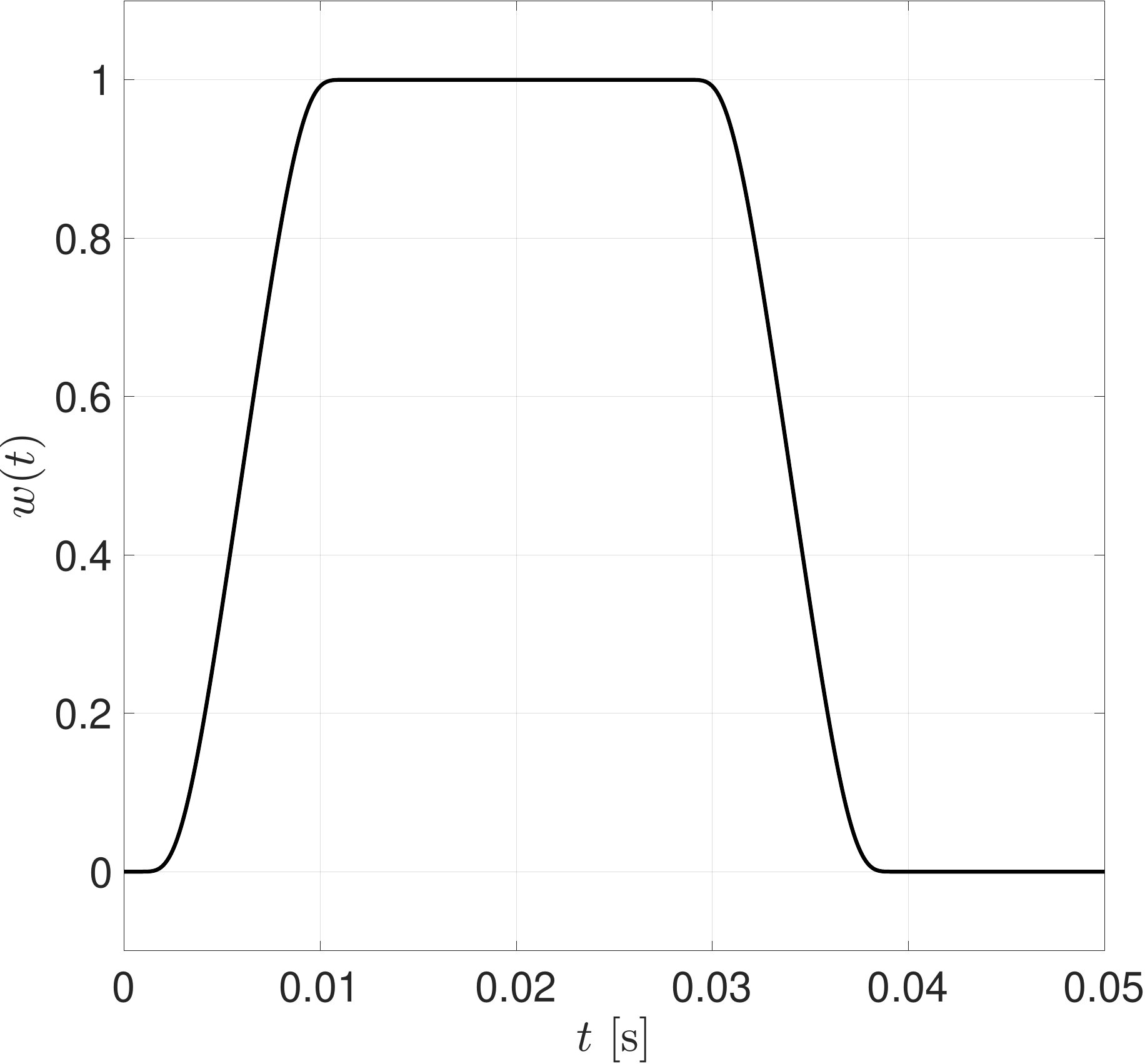}
	\caption{Time-dependent window function for the inflow boundary data of the horn.}
	\label{fig: horn_window}
\end{figure}
The optimal solution (with tolerance $10^{-6}$) is found after 111 iterations. In Figure \ref{fig: horn_snaps_300hz} the shape of the horn after 30 and 111 iterations is shown. The solution to the forward problem with the optimized shape at $t =0.01$ s, $t = 0.03$ s, $t = 0.04$ s, and $t = 0.05$ s is plotted in Figure \ref{fig: horn_300hz_forward_snaps}. Note that almost no wave propagation is present in the domain at $t = 0.05$ s. To evaluate the properties of the optimized horn the unregularized loss (given in \eqref{eq: cont_min_prob_horn_loss}) is computed for a range of input signals, one frequency at a time, and plotted in Figure \ref{fig: horn_300hz}. The results show that the optimized shape decreases the reflected sound at $300$ Hz by more than two orders of magnitude compared to the initial shape. 
\begin{figure}
\centering
\begin{subfigure}{.49\textwidth}
	\centering
	\includegraphics[width=\textwidth]{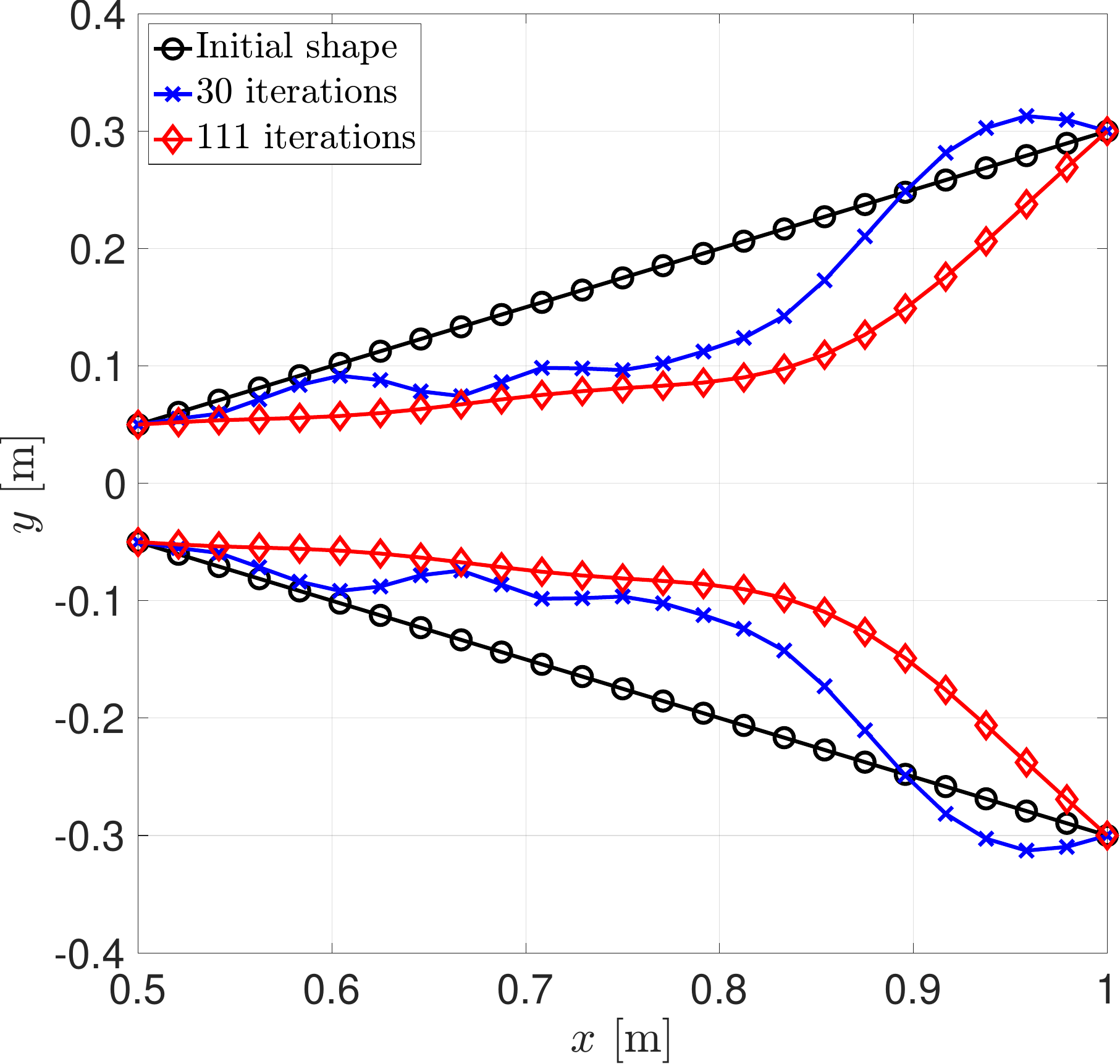}
	\caption{}
	\label{fig: horn_snaps_300hz}
\end{subfigure}
\begin{subfigure}{.49\textwidth}
	\centering
	\includegraphics[width=\textwidth]{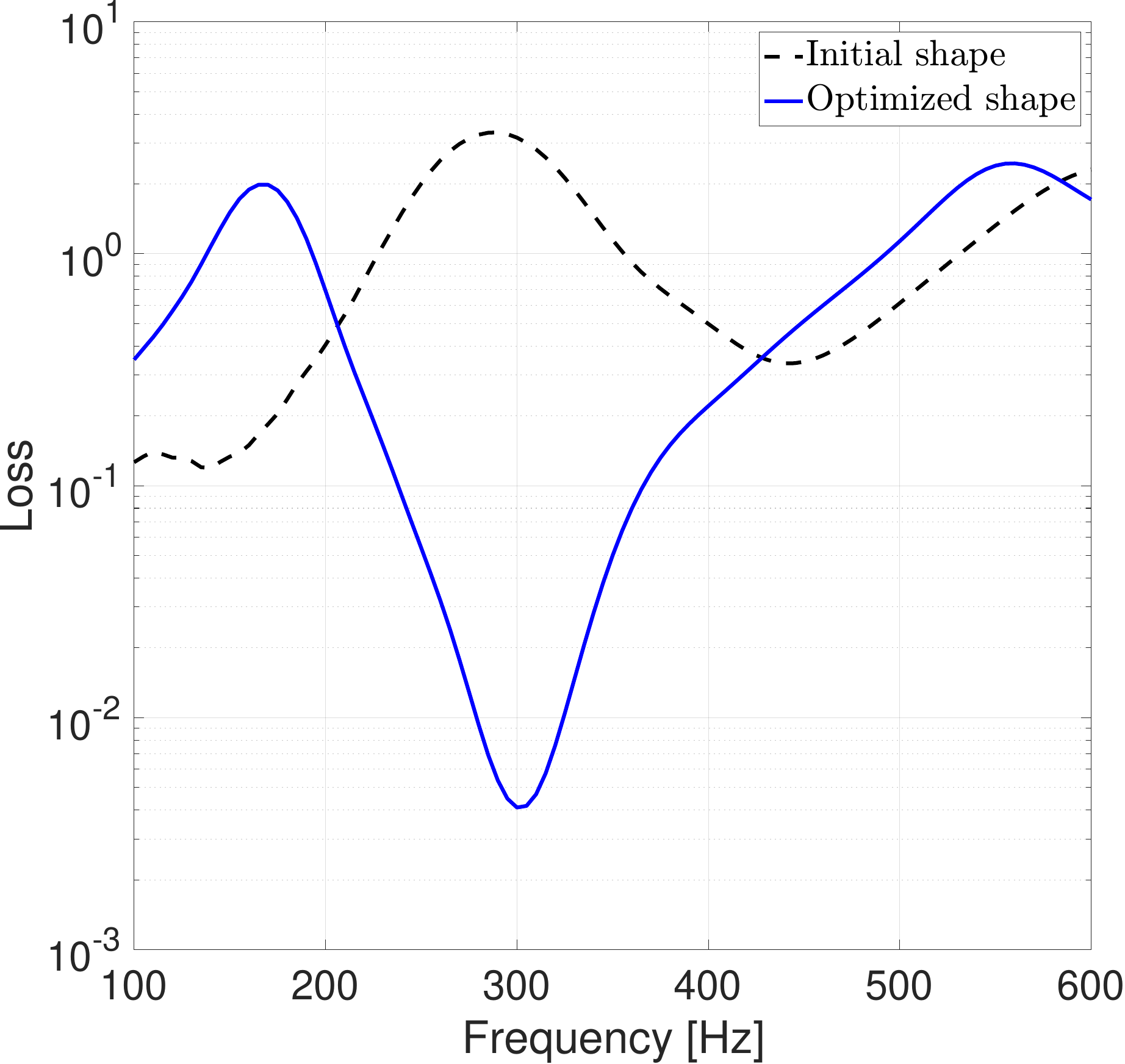}
	\caption{}
	\label{fig: horn_300hz}
\end{subfigure}
\caption{Results optimizing for 300 Hz. \textbf{(\protect\subref{fig: horn_snaps_300hz})} Shape of the horn for different optimization iterations. \textbf{(\protect\subref{fig: horn_300hz})} Unregularized loss \eqref{eq: cont_min_prob_horn_loss} for varying frequencies.}
\end{figure}
\begin{figure}[!htbp]
	\centering
	\includegraphics[width=1\textwidth]{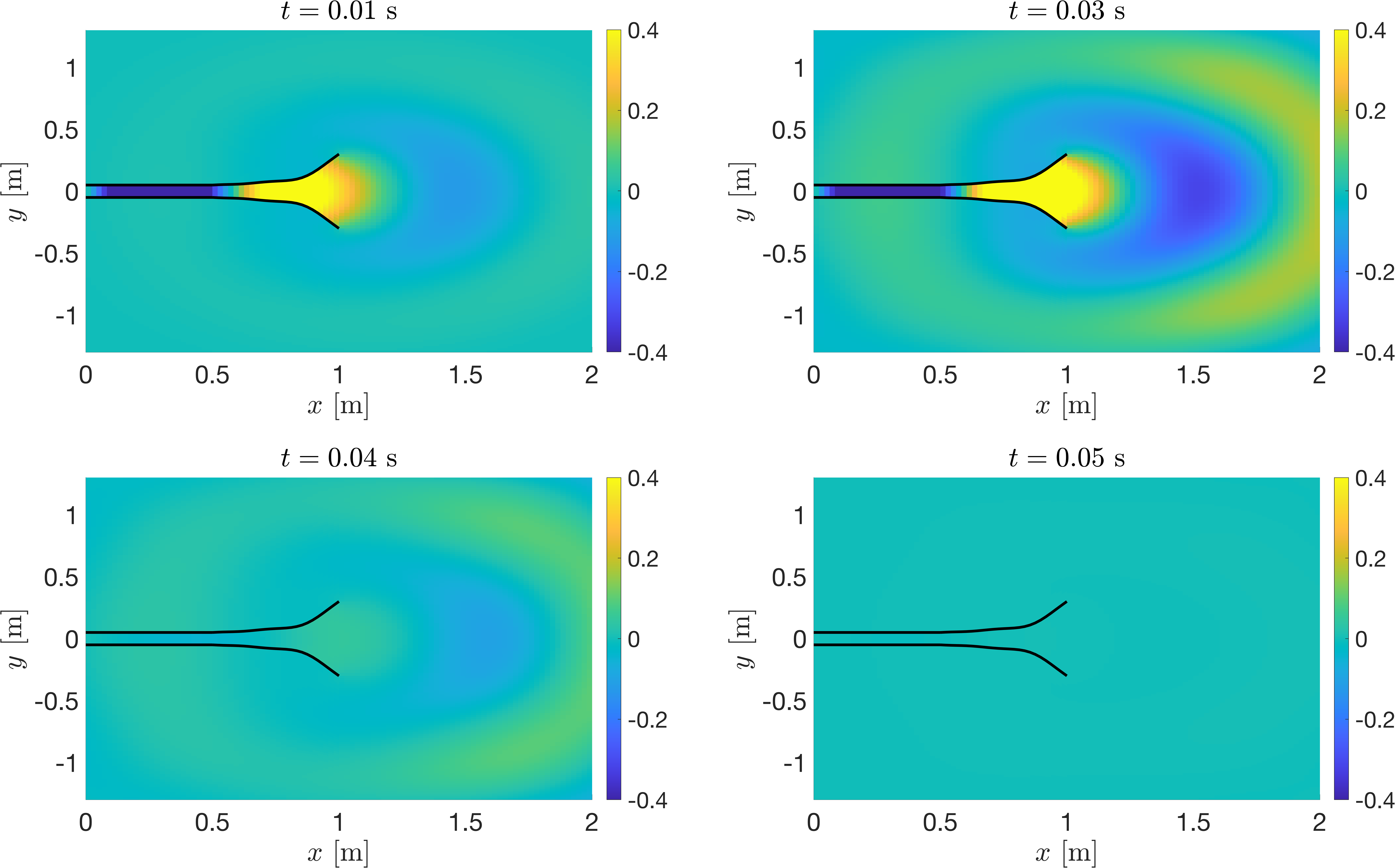}
	\caption{Snapshots of the solution to the forward problem with the air horn optimized for 300 Hz.}
	\label{fig: horn_300hz_forward_snaps}
\end{figure}

For the second experiment, we optimize the horn for a broadband signal consisting of a superposition of $N_f = 101$ equally spaced frequencies between 300 and 400 Hz. The inflow function becomes
\begin{equation}
	\label{eq: airhorn_broad_inflow}
	g(t) = \frac{w(t)}{N_f} \sum_{j = 1}^{N_f} \sin(2 \pi f_j t),
\end{equation}
where $f_j = 300 + (j-1)$, $j = 1,2,...,N_f$. Note that the computational cost of evaluating the loss and gradient with this inflow data function is almost identical to the costs of that of a single frequency (the difference lies in evaluating the sum in \eqref{eq: airhorn_broad_inflow}, which is negligible). This is one advantage of solving the problem in the time domain. The optimal solution (with tolerance $10^{-6}$) is found after 66 iterations. In Figure \ref{fig: horn_snaps_300-400hz} and \ref{fig: horn_300-400hz} the shape of the horn after 30 and 66 iterations and the unregularized loss for varying frequencies are presented, respectively. Once again we can observe that the reflected sound in the frequencies of interest is smaller by approximately two orders of magnitude for the horn with an optimized shape as compared to the initial shape.
%
%
\begin{figure}
\centering
\begin{subfigure}{.49\textwidth}
	\centering
	\includegraphics[width=\textwidth]{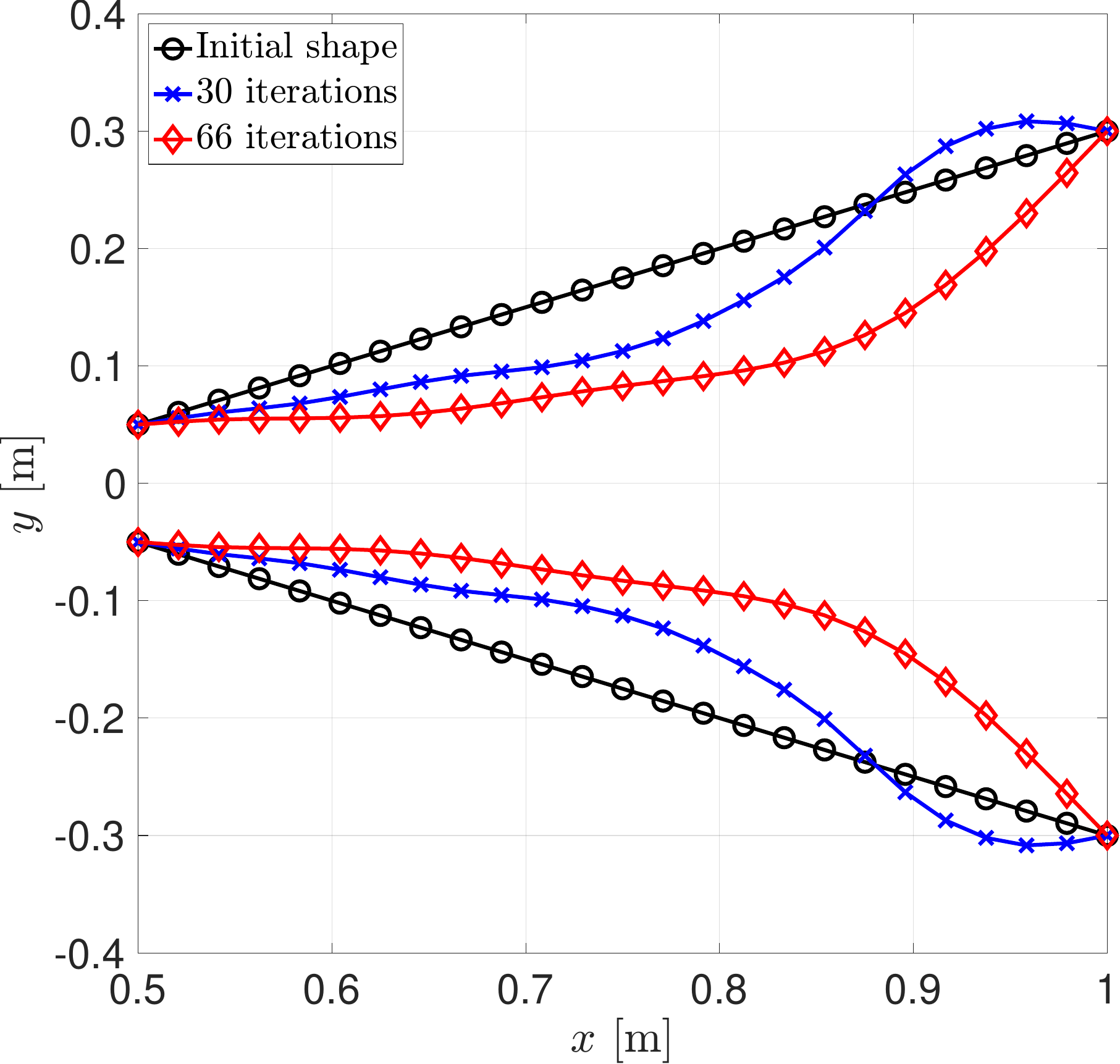}
	\caption{}
	\label{fig: horn_snaps_300-400hz}
\end{subfigure}
\begin{subfigure}{.49\textwidth}
	\centering
	\includegraphics[width=\textwidth]{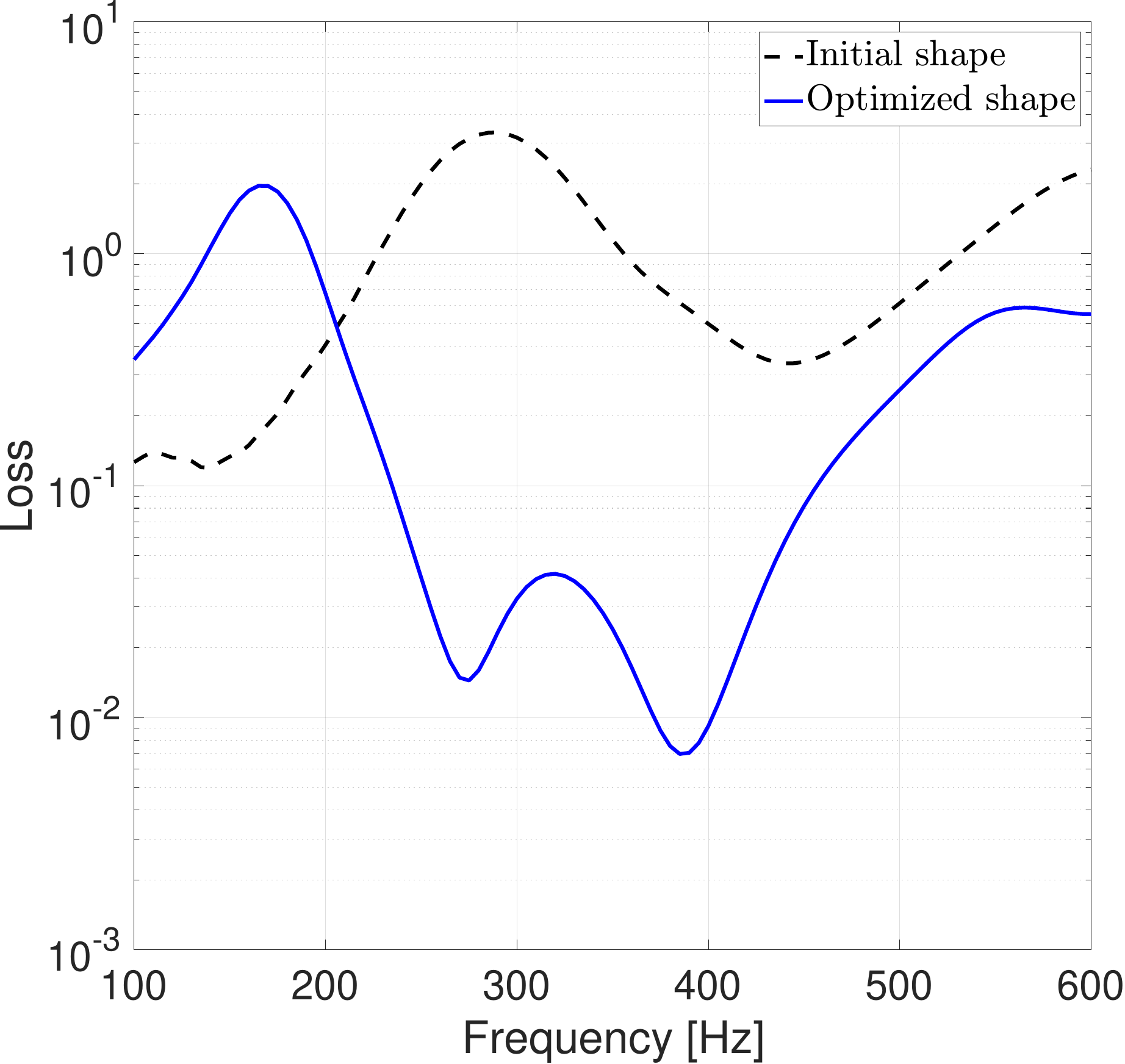}
	\caption{}
	\label{fig: horn_300-400hz}
\end{subfigure}
\caption{Results optimizing for 101 frequencies between 300 Hz and 400 Hz. \textbf{(\protect\subref{fig: horn_snaps_300-400hz})} Shape of the horn for different optimization iterations. \textbf{(\protect\subref{fig: horn_300-400hz})} Unregularized loss \eqref{eq: cont_min_prob_horn_loss} for varying frequencies.}
\label{fig: horn_300-400hz_both}
\end{figure}
}
\section{Conclusions}
\label{sec: concl}
In this paper, we present a method for solving shape optimization problems constrained by the acoustic wave equation using energy stable finite differences of high-order accuracy with SBP properties. The optimization method is based on the inversion of coordinate transformations, where the design domain is transformed to a rectangular reference domain and the resulting metric coefficients are optimized to minimize the loss functional. A gradient-based optimization algorithm is used, computing the gradient of the loss functional through the adjoint framework. From a numerical point of view, the problem is similar to full waveform inversion in seismic imaging, where the objective is to find material parameters in the medium. The problem is solved in the time domain, meaning that any time-dependent source and receiver data can be used. Using a combination of weak and strong imposition of boundary conditions through the SAT and projection methods, the scheme of the forward problem is shown to be dual consistent. 

Three numerical experiments are performed. First, the accuracy of the forward scheme is verified, demonstrating that the expected convergence rates are obtained. In the second numerical experiment, we show that the method can be used to reconstruct the shape of a seabed with only one source and one receiver close to the surface. In the final numerical experiment, we optimize the shape of the mouth of an air horn with the objective of minimizing reflected sound in the horn. For this last example, a complicated source function is used consisting of many frequencies to emphasize the advantages of solving the problem in the time domain.

Interesting topics for future research include, e.g., extensions of the method to three spatial dimensions and the impact of more sophisticated time-stepping schemes on the convergence behavior of the optimization problem. From a theoretical standpoint, the extension to 3D is straightforward, the issues lie in the efficient implementation of the solver and the computation of the gradient. Similarly, time integration using a self-adjoint method, e.g., the Runge--Kutta methods presented in \cite{sanz_serna} or SBP in time \cite{nordstrom_lundquist_2016}, is mostly an implementation challenge. {The error study of the adjoint-based gradient in Section \ref{subsec: bath} shows that choosing the time step a factor of 10 below the CFL stability limit (i.e., setting CFL constant $k = 0.1$) results in a 1\% relative error attributed to the time discretization error. Moreover, the error in the gradient seems to decrease linearly with $k$, implying that time steps close to the stability limit will result in approximately 10\% relative error. Using a self-adjoint time integrator could therefore allow for larger time steps without incurring an error. For self-adjoint Runge--Kutta methods, this entails evaluating the time integral in the adjoint-based gradient using the Runge--Kutta quadrature, thereby requiring storing also the Runge--Kutta stage values of the forward and adjoint solutions, resulting in increased memory costs \cite{sanz_serna}. Whether the benefits of obtaining the exact gradient to the fully discrete optimization problem through dual consistency in space and time outweigh the benefits in computational costs of the simpler methods remains to be answered.}

\section*{Acknowledgements}
G. Eriksson was supported by FORMAS (Grant No. 2018-00925). V. Stiernstr\"{o}m was supported by the Swedish Research Council (grant 2017-04626 VR). 

The computations were enabled by resources provided by the National Academic Infrastructure for Supercomputing in Sweden (NAISS) and the Swedish National Infrastructure for Computing (SNIC) at UPPMAX partially funded by the Swedish Research Council through grant agreements no. 2022-06725 and no. 2018-05973.

\section*{Data statement}
Data sharing not applicable to this article as no datasets were generated or analyzed during
the current study.

\appendix
\section{Operator derivatives}
\label{sec: op_dervs}
Note that all operators in $\Omega^+$ are independent of $\bv{p}$, hence we only have to consider the operators in $\Omega^-_p$. We shall begin with the derivatives of the coordinates of the grid points in $\Omega^-_p$. Using \eqref{eq: disc_xy_coords}, we get
\begin{equation}
	\label{eq: disc_coords_dp}
	\begin{alignedat}{3}
		\pder{x_{i,j}}{p_i} &= 0, \\
		\pder{y_{i,j}}{p_i} &= \bv{e}_i - \bv{e}_i \eta_j,
	\end{alignedat}
\end{equation}
where $\bv{e}_i$ is the $i$:th column of $I_{m_{\Gamma_I}}$. We also have the metric derivatives
\begin{equation}
	\label{eq: disc_metric_dervs_dp}
	\begin{alignedat}{2}
		\pder{\bv{X}_\xi}{p_i} &= \mathtt{diag}(D^-_\xi \pder{\bv{x}}{p_i}), \quad &&\pder{\bv{Y}_\xi}{p_i} = \mathtt{diag}(D^-_\xi \pder{\bv{y}}{p_i}), \\
		\pder{\bv{X}_\eta}{p_i} &= \mathtt{diag}(D^-_\eta \pder{\bv{x}}{p_i}), &&\pder{\bv{Y}_\eta}{p_i} = \mathtt{diag}(D^-_\eta \pder{\bv{y}}{p_i}),
	\end{alignedat}
\end{equation}
and metric coefficients
\begin{equation}
	\begin{alignedat}{2}
		\pder{\bv{J}}{p_i} &= \pder{\bv{X}_\xi}{p_i} \bv{Y}_{\eta} + \bv{X}_{\xi} \pder{\bv{Y}_\eta}{p_i} - \pder{\bv{X}_\eta
		}{p_i} \bv{Y}_{\xi} - \bv{X}_{\eta} \pder{\bv{Y}_\xi}{p_i}, \\
		\pder{\alpha_1}{p_i} &= -\pder{\bv{J}}{p_i} \bv{J}^{-2} (\bv{X}_\eta^2 + \bv{Y}_\eta^2) + 2 \bv{J}^{-1} (\pder{\bv{
		X}_\eta}{p_i} \bv{X}_{\eta} + \pder{\bv{Y}_\eta}{p_i} \bv{Y}_{\eta}), \\
		\pder{\beta}{p_i} &= \pder{\bv{J}}{p_i} \bv{J}^{-2} (\bv{X}_\xi \bv{X}_\eta + \bv{Y}_\xi \bv{Y}_\eta) - \bv{J}^{-
		1} (\pder{\bv{X}_\xi}{p_i} \bv{X}_\eta + \pder{\bv{Y}_\xi}{p_i} \bv{Y}_\eta + \bv{X}_\xi \pder{\bv{X}_\eta}{p_i} + \bv{Y}_\xi \pder{\bv{Y}_\eta}{p_i}), \\
		\pder{\alpha_2}{p_i} &= -\pder{\bv{J}}{p_i} \bv{J}^{-2} (\bv{X}_\xi^2 + \bv{Y}_\xi^2) + 2 \bv{J}^{-1} (\pder{\bv{X}
		_\xi}{p_i} \bv{X}_{\xi} + \pder{\bv{Y}_\xi}{p_i} \bv{Y}_{\xi}).
	\end{alignedat}
\end{equation}
Using this, we can construct the derivatives of the operators in $\Omega^-_p$ as follows
\begin{equation}
	\begin{alignedat}{2}
		\pder{D_L^-}{p_i} &= -\pder{\bv{J}}{p_i} \bv{J}^{-2} (D_{\xi \xi}^{(\alpha_1)} + D^-_\eta \beta D^-_\xi + D^-_\xi \beta D^-_\eta + D_{\eta \eta} ^{(\alpha_2)}) \\
		&+ J^{-1} (D_{\xi \xi}^{(\pder{\alpha_1}{p_i})} + D^-_\eta \pder{\beta}{p_i} D^-_\xi + D^-_\xi \pder{\beta}{p_i} D^-_\eta + D_{\eta \eta} ^{(\pder{\alpha_2}{p_i})}),
	\end{alignedat}
\end{equation}
where we have used that
\begin{equation}
	\pder{D_2^{(\bv{c}(\bv{p}))}}{p_i} = H^{-1}(-M^{(\pder{\bv{c}(\bv{p})}{p_i})} - e_l e_l^\top \pder{\bv{c}}{p_i} d_l^\top + e_r e_r^\top \pder{\bv{c}}{p_i} d_r^\top) =  D_2^{(\pder{\bv{c}(\bv{p})}{p_i})},
\end{equation}
since
\begin{equation}
	\pder{M^{(\bv{c}(\bv{p}))}}{p_i} = D_1^\top H \pder{\bar{\bv{c}}}{p_i} D_1 + R^{(\pder{\bv{c}(\bv{p})}{p_i})} = M^{(\pder{\bv{c}(\bv{p})}{p_i})},
\end{equation}
and
\begin{equation}
	\pder{R^{(\bv{c}(\bv{p}))}}{p_i} = R^{(\pder{\bv{c}(\bv{p})}{p_i})},
\end{equation}
for any $\bv{p}$-dependent vector $\bv{c}(\bv{p})$, due to the specific structure of $R^{(\bv{c}(\bv{p}))}$ (see, e.g., \cite{Mattsson2012,Almquist2020,STIERNSTROM2023112376}). We also have
\begin{equation}
	\begin{alignedat}{2}
		\pder{d^-_w}{p_i} &= e^-_w \pder{\bv{W}_2}{p_i}\bv{W}_2^{-2} (\alpha_1 e_w^{-\top} e^-_w D^-_\xi + \beta e_w^{-\top} e^-_w D^-_\eta) \\
		&-e^-_w \bv{W}_2^{-1} (\pder{\alpha_1}{p_i} e_w^{-\top} e^-_w D^-_\xi + \pder{\beta}{p_i} e_w^{-\top} e^-_w D^-_\eta), \\
		\pder{d^-_e}{p_i} &= -e^-_e \pder{\bv{W}_2}{p_i}\bv{W}_2^{-2} (\alpha_1 e_e^{-\top} e^-_e D^-_\xi + \beta e_e^{-\top} e^-_e D^-_\eta) \\
		&+ e^-_e \bv{W}_2^{-1} (\pder{\alpha_1}{p_i} e_e^{-\top} e^-_e D^-_\xi + \pder{\beta}{p_i} e_e^{-\top} e^-_e D^-_\eta), \\
		\pder{d^-_s}{p_i} &= e^-_s \pder{\bv{W}_1}{p_i} \bv{W}_1^{-2} (\alpha_2 e_s^{-\top} e^-_s D^-_\eta + \beta e_s^{-\top} e^-_s D^-_\xi) \\
		&-e^-_s \bv{W}_1^{-1} (\pder{\alpha_2}{p_i} e_s^{-\top} e^-_s D^-_\eta + \pder{\beta}{p_i} e_s^{-\top} e^-_s D^-_\xi), \\
		\pder{d^-_n}{p_i} &= -e^-_n \pder{\bv{W}_1}{p_i} \bv{W}_1^{-2} (\alpha_2 e_n^{-\top} e^-_n D^-_\eta + \beta e_n^{-\top} e^-_n D^-_\xi) \\
		&+e^-_n \bv{W}_1^{-1} (\pder{\alpha_2}{p_i} e_n^{-\top} e^-_n D^-_\eta + \pder{\beta}{p_i} e_n^{-\top} e^-_n D^-_\xi),
	\end{alignedat}
\end{equation}
with
\begin{equation}
	\pder{\bv{W}_1}{p_i} = \frac{\pder{\bv{X}_\xi}{p_i} \bv{X}_\xi + \pder{\bv{Y}_\xi}{p_i} \bv{Y}_\xi}{\sqrt{\bv{X}_\xi^2 + \bv{Y}_\xi^2}} \quad \text{and} \quad \pder{\bv{W}_2}{p_i} = \frac{\pder{\bv{X}_\eta}{p_i} \bv{X}_\eta + \pder{\bv{Y}_\eta}{p_i} \bv{Y}_\eta}{\sqrt{\bv{X}_\eta^2 + \bv{Y}_\eta^2}},
\end{equation}
and the differentiated inner product matrices
\begin{equation}
		\pder{\bar H}{p_i} = 
		\begin{bmatrix}
			0 & 0 \\ 0 & \pder{H^-}{p_i}
		\end{bmatrix}, \quad \text{with} \quad \pder{H^-}{p_i} = H^-_\xi H^-_\eta \pder{J}{p_i},
\end{equation}
and
\begin{equation}
	\begin{alignedat}{4}
		\pder{H^-_w}{p_i} &= H e_w \pder{\bv{W}_2}{p_i} e_w^\top, \quad \pder{H^-_e}{p_i} &&= H e_e \pder{\bv{W}_2}{p_i} e_e^\top, \\
		\pder{H^-_s}{p_i} &= H e_s \pder{\bv{W}_1}{p_i} e_s^\top, \quad \pder{H^-_n}{p_i} &&= H e_n \pder{\bv{W}_1}{p_i} e_n^\top.
	\end{alignedat}
\end{equation}
Using this, we get
\begin{equation}
 	\begin{alignedat}{2}
 		\pder{SAT_{BC_1}}{p_i} &= \pder{\bar H}{p_i} \bar H^{-2} \left (
		\sum_{k = w,e} \begin{bmatrix}
			e_k^{+ \top} H^{+}_k d_k^{+} & 0 \\
			0 & e_k^{- \top} H^{-}_k d_k^{-}
		\end{bmatrix} + \begin{bmatrix}
			0 & 0 \\
			0 & e_s^{- \top} H^{-}_s d_s^{-}
		\end{bmatrix} \right ) \\
		& -\bar H^{-1}
		\sum_{k = w,e} \begin{bmatrix}
			0 & 0 \\
			0 & e_k^{- \top} (\pder{H^{-}_k}{p_i} d_k^{-} + H^{-}_k \pder{d_k^{-}}{p_i})
		\end{bmatrix} \\
		& -\bar H^{-1} \begin{bmatrix}
			0 & 0 \\
			0 & e_s^{- \top} (\pder{H^{-}_s}{p_i}d_s^{-} + H^{-}_s \pder{d_s^{-}}{p_i})
		\end{bmatrix},
 	\end{alignedat}
\end{equation} 
\begin{equation}
	\begin{alignedat}{2}
		\pder{SAT_{BC_2}}{p_i} &= \pder{\bar H}{p_i} \bar H^{-2}
		\sum_{k = w,e}  \begin{bmatrix}
		 e_k^{+ \top} H^{+}_k e_k^{+} & 0 \\
		0 & e_k^{-\top} H^{-}_k e_k^{-}
	\end{bmatrix} \\
	&-\bar H^{-1}
		\sum_{k = w,e}  \begin{bmatrix}
		 0 & 0 \\
		0 & e_k^{-\top} \pder{H^{-}_k}{p_i} e_k^{-}
	\end{bmatrix},
	\end{alignedat}
\end{equation}
\begin{equation}
	\begin{alignedat}{2}
		\pder{SAT_{IC}}{p_i} &= \pder{\bar H}{p_i} \bar H^{-2}
	\begin{bmatrix}
		e_s^{+ \top} H^{+}_s d^{+}_s & e_s^{+ \top} H^{+}_s d^{-}_n \\
		0 & 0
	\end{bmatrix} \\
	&-\bar H^{-1}
	\begin{bmatrix}
		0 & e_s^{+ \top} H^{+}_s \pder{d^{-}_n}{p_i} \\
		0 & 0
	\end{bmatrix},
	\end{alignedat}
\end{equation}
and
\begin{equation}
	\begin{alignedat}{2}
		\pder{P}{p_i} &= \pder{\bar H}{p_i}\bar H^{-2} L^\top (L \bar H^{-1} L^\top)^{-1} L \\
		&- \bar H^{-1} L^\top (L \bar H^{-1} L^\top)^{-1} L \pder{\bar H}{p_i}\bar H^{-2} L^\top (L \bar H^{-1} L^\top)^{-1} L \\
		&= P \pder{\bar H}{p_i} \bar H^{-1} (I - P).
	\end{alignedat}
\end{equation}
We are now ready to write down the equations for $\pder{D}{p_i}$ and $\pder{E}{p_i}$, we have
\begin{equation}
	\begin{alignedat}{2}
	\pder{D}{p_i} &= c^2
	\pder{P}{p_i} \left ( \begin{bmatrix}
		D_L^{+} & 0 \\
		0 & D_L^{-}
	\end{bmatrix} + SAT_{BC_1} + SAT_{IC} \right ) P \\
	&+ c^2 P \left ( \begin{bmatrix}
		0 & 0 \\
		0 & \pder{D_L^{-}}{p_i}
	\end{bmatrix} + \pder{SAT_{BC_1}}{p_i} + \pder{SAT_{IC}}{p_i} \right ) P \\
	&+c^2 P \left ( \begin{bmatrix}
		D_L^{+} & 0 \\
		0 & D_L^{-}
	\end{bmatrix} + SAT_{BC_1} + SAT_{IC} \right ) \pder{P}{p_i},
	\end{alignedat}
\end{equation}
and
\begin{equation}
	\label{eq: disc_E_dp}
	\pder{E}{p_i} = c \pder{P}{p_i} SAT_{BC_2} P + c P \pder{SAT_{BC_2}}{p_i} P + c P SAT_{BC_2} \pder{P}{p_i}.
\end{equation}

\begin{remark}
	The specific form of $\pder{x_{i,j}}{p_i}$ and $\pder{y_{i,j}}{p_i}$ in \eqref{eq: disc_coords_dp} would allow us to significantly simplify the expressions in \eqref{eq: disc_metric_dervs_dp}-\eqref{eq: disc_E_dp}. But, for clarity and completeness, we present the derivatives of the operators for a general mapping.
\end{remark}

\section{Proof of Lemma \ref{lemma: cont_grad}}\label{sec: cont_grad_proof}
Start by considering the first variation of $\mathtt{J}$ in \eqref{eq: cont_min_prob_multiblock} with respect to the parameterization $p$. We have,
\begin{equation}\label{eq: variation_J}
	\begin{aligned}
	\delta \mathtt{J} 
	&= \delta\left(\frac{1}{2} \int_0^T r^2 \: dt \right) \\
	&= \int_0^T \int_{\Omega^+} r\frac{\delta r(u^+,\bv{x}_r,t)}{\delta u^+(\bv{x},t)}\delta u^+(\bv{x},t) \: d\bv{x} dt  \\
	&= \int_0^T \int_{\Omega^+} r\frac{\delta u^+(\bv{x}_r,t)}{\delta u^+(\bv{x},t)}\delta u^+(\bv{x},t) \: d\bv{x} dt  \\
	&= \int_0^T \int_{\Omega^+} r \hat{\delta}(\bv{x} - \bv{x}_r)\delta u^+(\bv{x},t) \: d\bv{x} dt  \\
	&= \int_0^T(r\hat{\delta}(\bv{x} - \bv{x}_r),\delta u^+)_{\Omega^+} \: dt,
  \end{aligned}
\end{equation}
where $r = r(u^+,\bv{x}_r,t) = u^+(\bv{x}_r,t) - u_d(t)$. The Lagrangian loss functional of \eqref{eq: cont_min_prob_multiblock} is 
\begin{equation}\label{eq: lagrangian_cont_multiblock}
	\mathtt{L} = \mathtt{J} + \int_0^T (\lambda^+, u^+_{tt} - c^2\Delta u^+ - f(t)\hat{\delta}(\bv{x} - \bv{x}_s)_{\Omega^+} + (\lambda^-, u^-_{tt} - c^2\Delta u^-))_{\Omega^-} \: dt.
\end{equation}
Note that $\mathtt{L} = \mathtt{J}$ such that $\delta \mathtt{L} = \delta \mathtt{J}$ for any $u^\pm$ satisfying \eqref{eq: cont_wave_eq_multiblock}, and we therefore proceed with analyzing $\delta \mathtt{L}$. As a first step \eqref{eq: lagrangian_cont_multiblock} is recast into a suitable form through integration by parts, starting with the contribution from $\Omega^+$. Note that $\delta f(t) \hat{\delta}(\bv{x} - \bv{x_s}) = 0$ (since $\Omega^+$ is independent of $p$) and we therefore disregard the point source in the following analysis. Integrating the second term in \eqref{eq: lagrangian_cont_multiblock} by parts in time and space twice results in
\begin{equation}
	\int_0^T (\lambda^+, u^+_{tt} - c^2\Delta u^+ )_{\Omega^+} \: dt = VT^+ + IT^+,
\end{equation}
where
\begin{equation}\label{eq: VTp}
	VT^+ = \int_0^T (\lambda^+_{tt} - c^2\Delta \lambda^+, u^+)_{\Omega^+} \: dt,
\end{equation}
are the volume terms and 
\begin{equation}\label{eq: ITp}
	IT^+ = \int_0^T - \langle\lambda^+, c^2\bv{n}^+ \cdot \nabla u^+ \rangle_{\delta \Omega^{(+,s)}} + \langle c^2\bv{n}^+ \cdot \nabla \lambda^+, u^+ \rangle_{\delta \Omega^{(+,s)}} \: dt,
\end{equation}
are the interface terms. Any boundary terms vanish by the forward and adjoint initial and boundary conditions in \eqref{eq: cont_wave_eq_multiblock}, \eqref{eq: cont_adj_wave_eq_multiblock}. Due to the $p$-dependency of $\Omega_p^-$, the third term in \eqref{eq: lagrangian_cont_multiblock} requires a slightly different treatment. Integrating in time twice and space once and using the forward and adjoint initial and boundary conditions yields results in
\begin{equation}
	\int_0^T (\lambda^-, u^-_{tt} - c^2\Delta u^- )_{\Omega^-_p} \: dt = VT^- + BT^- + IT^-,
\end{equation}
where 
\begin{equation}\label{eq: VTm}
	VT^- = \int_0^T  (\lambda^-_{tt}, u^-)_{\Omega^-_p} + (\nabla \lambda^-, c^2\nabla u^-)_{\Omega^-_p} \: dt,
\end{equation}

\begin{equation}\label{eq: BTm}
	\begin{aligned}
	BT^- &= \int_0^T \langle c\lambda^-_t, u^-\rangle_{\partial \Omega^{(-,e)}_p} + \langle c\lambda^-_t, u^-\rangle_{\partial \Omega^{(-,w)}_p} \: dt,
	\end{aligned}
\end{equation}
and
\begin{equation}\label{eq: ITm}
	IT^- = \int_0^T -\langle\lambda^-, c^2\bv{n}^- \cdot \nabla u^-\rangle_{\partial \Omega^{(-,n)}_p} \: dt.
\end{equation}
$BT^-$ is obtained by using the boundary conditions $c\bv{n}^- \cdot \nabla u^- = u^-_t$ and integrating by parts in time, canceling boundary terms using the forward and adjoint initial conditions. Adding the interface terms \eqref{eq: ITp} and \eqref{eq: ITm} (exchanging $\del \Omega^{(+,s)}$, $\del \Omega_p^{(-,n)}$ for $\Gamma_I$) yields
\begin{equation}\label{eq: IT}
	\begin{aligned}
	IT &= IT^+ + IT^- \\
	=\int_0^T \Big( -&\langle\lambda^+, c^2\bv{n}^+ \cdot \nabla u^+\rangle_{\Gamma_I} +\langle c^2\bv{n}^+ \cdot \nabla \lambda^+, u^+\rangle_{\Gamma_I} \\
	- & \langle\lambda^-, c^2\bv{n}^- \cdot \nabla u^-\rangle_ {\Gamma_I} \Big)\:  dt\\
	= \int_0^T &\langle c^2\bv{n}^+ \cdot \nabla \lambda^+, u^+\rangle_{\Gamma_I} \: dt,
	\end{aligned}
\end{equation}
where the last equality was obtained using the forward and adjoint interface conditions in \eqref{eq: cont_wave_eq_multiblock}, \eqref{eq: cont_adj_wave_eq_multiblock}. To summarize, we have arrived at
\begin{equation}
	\mathtt{L} = \mathtt{J} + VT^+ + VT^- + BT^- + IT,
\end{equation}
and will now consider $\delta \mathtt{L}$. 

To start with, by \eqref{eq: VTp} $\delta VT^+ = \int_0^T(\lambda^+_{tt} - c^2\Delta \lambda^+, \delta u^+)_{\Omega^+} \: dt$ such that
\begin{equation}\label{eq: variation_1}
	\delta (\mathtt{J} + VT^+) = \int_0^T(\lambda^+_{tt} - c^2\Delta \lambda^+ + r \hat{\delta}(\bv{x} - \bv{x_r}), \delta u^+)_{\Omega^+ } \: dt = 0,
\end{equation}
due to \eqref{eq: variation_J} and \eqref{eq: cont_adj_wave_eq_multiblock}. To obtain $\delta VT^-$, we first transform \eqref{eq: VTm} to the reference domain $\tilde{\Omega}^-$ using \eqref{eq: cont_curv_dx_dy}, \eqref{eq: cont_curv_metric_coeff} and \eqref{eq: cont_curv_norms}, resulting in
\begin{equation}
	\begin{aligned}
	VT^- = \int_0^T \Big((\lambda^-_{tt}, Ju^-)_{\tilde{\Omega}^-} &+ c^2(\lambda^-_\xi,\alpha_1 u^-_\xi + \beta u^-_\eta)_{\tilde{\Omega}^-} \\
	&+ c^2(\lambda^-_\eta,\alpha_2 u^-_\eta + \beta u^-_\xi)_{\tilde{\Omega}^-} \Big) \: dt.
	\end{aligned}
\end{equation}
Taking the first variation, applying the product rule and grouping terms containing $\delta u^-$ and variations of metric terms (using that $\delta u^-_{\xi,\eta} = (\delta u^-)_{\xi,\eta}$), results in
\begin{equation}
	\begin{aligned}
	&\delta VT^- = VT^-_{\delta u} + VT^-_{\delta X},
	\end{aligned}
\end{equation}
with 
\begin{equation}\label{eq: VTm_du}
	\begin{aligned}
	VT^-_{\delta u} = \int_0^T \Big((\lambda^-_{tt}, J \delta u^-)_{\tilde{\Omega}^-} &+ c^2(\lambda^-_\xi, \alpha_1 (\delta u^-)_\xi + \beta (\delta u^-)_\eta)_{\tilde{\Omega}^-}\\
	&+ c^2(\lambda^-_\eta, \alpha_2 (\delta u^-)_\eta + \beta (\delta u^-)_\xi)_{\tilde{\Omega}^-} \Big) \: dt,
	\end{aligned}
\end{equation}
and 
\begin{equation}\label{eq: VTm_dX}
	\begin{aligned}
	VT^-_{\delta X} = \int_0^T \Big((\lambda^-_{tt}, \delta J u^-)_{\tilde{\Omega}^-} &+ c^2(\lambda^-_\xi, \delta \alpha_1 u^-_\xi + \delta \beta u^-_\eta)_{\tilde{\Omega}^-}\\
	&+ c^2(\lambda^-_\eta, \delta \alpha_2 u^-_\eta + \delta \beta u^-_\xi)_{\tilde{\Omega}^-} \Big) \: dt.
	\end{aligned}
\end{equation}
Integrating $VT^-_{\delta u}$ in \eqref{eq: VTm_du} by parts, raising $\delta u^-$, and transforming back to the physical domain yields
\begin{equation}
	\begin{aligned}
	VT^-_{\delta u} &= \int_0^T(\lambda^-_{tt} - c^2\Delta \lambda^-,  \delta u^-)_{\Omega^-_p} + \langle c^2\bv{n}^- \cdot \nabla \lambda^-, \delta u^- \rangle_{\partial\Omega^{-}_p} \: dt\\
	&= BT^-_{\delta u} + IT^-_{\delta u},
	\end{aligned}
\end{equation}
where \eqref{eq: cont_adj_wave_eq_multiblock} is used to cancel the volume and boundary terms such that the remaining terms are
\begin{equation}\label{eq: BTm_du}
	\begin{aligned}
	BT^-_{\delta u} &= \int_0^T \langle c^2\bv{n}^- \cdot \nabla \lambda^-, \delta u^- \rangle_{\Omega^{(-,w)}_p} + \langle c^2\bv{n}^- \cdot \nabla \lambda^-, \delta u^- \rangle_{\Omega^{(-,e)}_p} \: dt,
	\end{aligned}
\end{equation}
and
\begin{equation}\label{eq: ITm_du}
	\begin{aligned}
	IT^-_{\delta u} &=  \int_0^T\langle c^2\bv{n}^- \cdot \nabla \lambda^-, \delta u^-\rangle_{\Gamma_I} \: dt.
	\end{aligned}
\end{equation}
For $VT^-_{\delta X}$ in \eqref{eq: VTm_dX} we integrate by parts, raising $\lambda^-$, resulting in
\begin{equation}
	\begin{aligned}
	VT^-_{\delta X} &= V^-_{VT} + BT^-_{\delta X},
	\end{aligned}
\end{equation}
where
\begin{equation}\label{eq: V_VT}
	V^-_{VT} = \int_0^T c^2(\lambda^-, \delta J \Delta u^- - (\delta \alpha_1 u^-_\xi + \delta \beta u^-_\eta)_\xi - (\delta \alpha_2 u^-_\eta + \delta \beta u^-_\xi)_\eta)_{\tilde{\Omega}^-} \: dt,
\end{equation}
and
\begin{equation}\label{eq: BT_dX}
	\begin{aligned}
	BT^-_{\delta X} = \int_0^T c^2\Big( &\langle\lambda^-, \delta\alpha_1u^-_\xi + \delta\beta u^-_\eta\rangle_{\partial\tilde{\Omega}^{(-,e)}} \\
	-&\langle\lambda^-, \delta\alpha_1u^-_\xi + \delta\beta u^-_\eta\rangle_{\partial\tilde{\Omega}^{(-,w)}} \\
	+&\langle \lambda^-, \delta\alpha_2u^-_\eta + \delta\beta u^-_\xi \rangle_{\del \tilde{\Omega}^{(-,n)}}\\
	-&\langle\lambda^-, \delta\alpha_2u^-_\eta + \delta\beta u^-_\xi\rangle_{\partial\tilde{\Omega}^{(-,s)}}\Big) \: dt.
	\end{aligned}
\end{equation}
To obtain \eqref{eq: V_VT} we used $u^-_{tt} = c^2\Delta u^-$ (where $\Delta u^-$ is given by \eqref{eq: curv_laplace}) to substitute the second derivative in time. Focusing on the remaining boundary terms, we return to \eqref{eq: BTm} and transform to the reference domain
\begin{equation}
	BT^- = \int_0^t \langle\lambda^-_t, cW_1 u^- \rangle_{\partial \tilde{\Omega}^{(-,e)}} + \langle\lambda^-_t, cW_1 u^- \rangle_{\partial \tilde{\Omega}^{(-,w)}} \: dt.
\end{equation}
By the product rule $\delta (W_1 u^-) =  \delta W_1 u^- + W_1 \delta u^-$, it follows that
\begin{equation}\label{eq: dBTm}
	\begin{aligned}
		\delta BT^- = \int_0^T \Big( & \langle\lambda^-_t, c\delta W_1 u^-\rangle_{\partial \tilde{\Omega}^{(-,e)}} + \langle\lambda^-_t, c\delta W_1 u^-\rangle_{\partial \tilde{\Omega}^{(-,w)}} \\
		 + & \langle c\lambda^-_t, \delta u^-\rangle_{\partial\Omega^{(-,e)}} + \langle c\lambda^-_t, \delta u^- \rangle_{\partial\Omega^{(-,w)}} \Big) \: dt,	
	\end{aligned}
\end{equation}
where the two latter terms are transformed back to the physical domain. Considering $BT^-_{\delta u} + \delta BT^- $ (given by \eqref{eq: BTm_du}, \eqref{eq: dBTm}) the terms on the physical domain cancel since by the adjoint outflow conditions in \eqref{eq: cont_adj_wave_eq_multiblock}, $\langle\lambda^-_t + c\bv{n}^- \cdot \nabla \lambda^-, \delta u^-\rangle_{\partial\Omega^{(-,w)}} = 0$ due to $\lambda_\tau = -\lambda_t$ (and similar for the east boundary). Additionally including the term $BT^-_{\delta X}$ in \eqref{eq: BT_dX}, the variation of the boundary terms are
\begin{equation}\label{eq: V_BT}
\begin{aligned}
	V^-_{BT} = &BT^-_{\delta X} + BT^-_{\delta u} + \delta BT^- =\\
		\int_0^T \Big(&c^2(\langle \lambda^-, \delta\alpha_1u^-_\xi + \delta\beta u^-_\eta\rangle_{\partial\tilde{\Omega}^{(-,e)}} - \langle\lambda^-, \delta\alpha_1u^-_\xi + \delta\beta u^-_\eta\rangle_{\partial\tilde{\Omega}^{(-,w)}})\\ 
		+&c^2(\langle\lambda^-, \delta\alpha_2u^-_\eta + \delta\beta u^-_\xi\rangle_{\del \tilde{\Omega}^{(-,n)}} - \langle\lambda^-, \delta\alpha_2u^-_\eta + \delta\beta u^-_\xi\rangle_{\partial\tilde{\Omega}^{(-,s)}}) \\
		+&c(\langle \lambda^-_t, \delta W_1 u^- \rangle_{\partial \tilde{\Omega}^{(-,w)}} + \langle \lambda^-_t, \delta W_1 u^- \rangle_{\partial \tilde{\Omega}^{(-,e)}}) \Big) \: dt.
\end{aligned}
\end{equation}

For the variation of interface terms, first note that by \eqref{eq: IT} $\delta IT = (c^2\bv{n}^+ \cdot \nabla \lambda^+, \delta u^+)_{\Gamma_I}$. Then by \eqref{eq: ITm_du}, $\delta IT + IT^-_{\delta u} = 0$, due to the forward and adjoint interface conditions of \eqref{eq: cont_wave_eq_multiblock} and \eqref{eq: cont_adj_wave_eq_multiblock}. We have therefore arrived at $\delta \mathtt{L} = V^-_{VT} + V^-_{BT}$, given by \eqref{eq: V_VT}, and \eqref{eq: V_BT}. Grouping terms in $V^-_{VT}$, and $V^-_{BT}$ by inner products with $\lambda^-$ and $\lambda_t^-$ and transforming back to the physical domain results in
\begin{equation}\label{eq: cont_variation}
	\delta \mathtt{L} =  V_{\lambda} + V_{\lambda_t},
\end{equation}
where
\begin{equation}\label{eq: V_lambda}
	\begin{aligned}
	V_{\lambda} = -c^2\Big( -&(\lambda^-, J^{-1}\delta J\Delta u^-)_{\Omega^- }\\
	+&(\lambda^-,J^{-1}((\delta \alpha_1 u^-_\xi + \delta \beta u^-_\eta)_\xi + (\delta \alpha_2 u^-_\eta + \delta \beta u^-_\xi)_\eta))_{\Omega^- } \\
	-&\langle \lambda^-, W_1^{-1}(\delta \alpha_1u^-_\xi + \delta\beta u^-_\eta) \rangle_{\partial\Omega^{(-,e)} } \\
	+&\langle \lambda^-,  W_1^{-1}(\delta \alpha_1u^-_\xi + \delta\beta u^-_\eta) \rangle_{\partial\Omega^{(-,w)} } \\
	-&\langle\lambda^-,  W_2^{-1}(\delta\alpha_2u^-_\eta + \delta\beta u^-_\xi )\rangle_{\partial\Omega^{(-,n)}} \\
	+&\langle\lambda^-, W_2^{-1}(\delta\alpha_2u^-_\eta + \delta\beta u^-_\xi)\rangle_{\partial\Omega^{(-,s)} } \Big),
	\end{aligned}
\end{equation}
and
\begin{equation}\label{eq: V_lambda_t}
	\begin{aligned}
		V_{\lambda_t} = &c\Big(\langle \lambda^-_t, W_1^{-1}\delta W_1 u^- \rangle_{\partial\Omega^{(-,e)}} + \langle \lambda^-_t, W_1^{-1}\delta W_1 u^- \rangle_{\partial\Omega^{(-,w)}}\Big).
	\end{aligned}
\end{equation}
Since $\delta\mathtt{J} = \delta\mathtt{L} = V_{\lambda} + V_{\lambda_t}$, we arrive at $\frac{\delta \mathtt{J}}{\delta p} = \frac{\delta \mathtt{L}}{\delta p} = G_{\lambda} + G_{\lambda_t}$, i.e, \eqref{eq: cont_grad}. This completes the proof.

\clearpage
\bibliographystyle{elsarticle-num}
\bibliography{references}

\end{document}